\numberwithin{equation}{section}
\newtheorem{theorem}{Theorem}[section]
\newtheorem{assumption}[theorem]{Assumption}
\newtheorem{definition}[theorem]{Definition}
\newtheorem{lemma}[theorem]{Lemma}
\newtheorem{proposition}[theorem]{Proposition}
\newtheorem{remark}[theorem]{Remark}
\numberwithin{equation}{section}
\renewcommand{\d}{\mathrm{d}}
\newcommand{\R}{\mathbb{R}}
\newcommand{\N}{\mathbb{N}}
\definecolor{QG}{named}{blue}
\begin{document}
	\title{\textbf{Population dynamics model for aging}}

	\author{\textsc{Jacques Demongeot$^{(1)}$ and  Pierre Magal$^{(2),}$\footnote{ \href{mailto:pierre.magal@u-bordeaux.fr}{pierre.magal@u-bordeaux.fr}} }\\
		{\small \textit{$^{(1)}$Université Grenoble Alpes, AGEIS EA7407, }} \\
		{\small \textit{	F-38700 La Tronche, France.}} \\
		{\small \textit{$^{(2)}$Univ. Bordeaux, IMB, UMR 5251, F-33400 Talence, France.}} \\
		{\small \textit{CNRS, IMB, UMR 5251, F-33400 Talence, France.}}
	}
	\maketitle

\begin{abstract}
	The chronological age used in demography describes the linear evolution of the life of a living being. The chronological age cannot give precise information about the exact developmental stage or aging processes an organism has reached. On the contrary, the biological age (or epigenetic age) represents the true evolution of the tissues and organs of the living being. Biological age is not always linear and sometimes proceeds by discontinuous jumps. These jumps can be positive (we then speak of rejuvenation) or negative (in the event of premature aging), and they can be dependent on endogenous events such as pregnancy (negative jump) or stroke (positive jump) or exogenous ones such as surgical treatment (negative jump) or infectious disease (positive jump). The article proposes a mathematical model of the biological age by defining a valid model for the two types of jumps (positive and negative). The existence and uniqueness of the solution are solved, and its temporal dynamic is analyzed using a moments equation. We also provide some individual-based stochastic simulations. 
\end{abstract}

\medskip 

\noindent \textbf{Keywords:}  Nonlocal transport equations; Equation of moments of distributions; Rejuvenation and Premature aging;  Biological age.

\section{Introduction}
Over the past decade, scientists have studied several indicators of the health status of individuals. Chronological age, the time since birth, can be considered one of them. Several other indicators are required to improve the understanding of the health status of individuals. These indicators will combine CpG, DNA methylation (DNAm) and age (chronological age), health, and lifestyle outcomes. In the present article, we will call this indicator \textit{biological age}, or \textit{epigenetic age} \cite{Jain}\cite{Bernabeu}.   

\medskip 
Aging is generally considered irreversible due to the severe damage to the cell resulting in a gradual loss of functions and increasing fragility until final death. Recent publications proved that this process can be reversible. For example, biological age increases due to stress or traumatism and decreases in the recuperation phase during post-partum or after stress-induced or surgery-induced cell aging \cite{Poganik} (for mice).

\medskip 
This possibility of forward and backward changes in biological age due to specific events during an individual's life (pregnancy, stress, surgery, traumatism, etc.) has been considered in various approaches. For example, in \cite{Demongeot}, the calculation of biological age is based on estimating the time left to live depending on the number of undifferentiated cells remaining in the stem cell reservoir of the organs providing the patient's vital functions.

\medskip 
The present article aims to present a model for a population of patients. Our model mainly consists of continuous growth of the biological age together with some models for jumps (forward and backward) in the biological age to derive a mean behavior at a population's level. The continuous growth of the biological age corresponds to the biological age when no accidents or jumps occur. Therefore, in this case, the biological age grows at the same speed as the chronological age. In our model, the difference between biological age and chronological age consists of modeling random jumps forward or backward when the patient becomes sick or recovers from a disease such as cancer. In our model, the disease's severity corresponds to the jumps' amplitude. Here it is assumed to increase linearly with biological age.   In conclusion, we will propose a larger model class for the jumps. The general framework of models is the so-called Levy process \cite{Ycart}\cite{Applebaum}. In  Huang et al.\cite{Huang}, a mathematical model based on a stochastic differential equation is proposed to model the dynamic of biological age at the level of a single patient. 

\medskip 
The plan of the paper is the following. Section \ref{Section2} is devoted to the biological background of aging. In section  \ref{Section3}, we present an aging model with a rejuvenation mechanism only. In section \ref{Section4}, we present an aging model with a premature aging mechanism only. In section  \ref{Section5}, we combine rejuvenation and premature aging mechanisms. Section \ref{Section6} is devoted to numerical simulation, and section \ref{Section7} is to the discussion. 

\section{Biological background}
\label{Section2}

Due to microscopic cellular events, the health status of patients the life is first able to regenerate the tissue destroyed. When the chronological age increases, the patients are less capable of repairing such a tissue, leading to phenotypic medical symptoms. The gravity of the medical symptoms is also more and more severe when patients' chronological age increases. The medical symptoms result from an accumulation threshold at the microscopic level of cell delations. This justifies the model with jumps (aging and rejuvenating jump) in biological age with an amplitude that increases in the life course.

\medskip 
We can consider that, even if in fine a loss of function is due to the dysfunctioning (or even death) of organs or even to the death of the whole organism, it results from a continuous disappearance of differentiated cells not replaced by undifferentiated cells (which are finite in number at birth and whose proliferation is bounded by Hayflick's limit), the event signaling accelerated aging or rejuvenation is a discrete phenomenon linked to a phenotypic symptom appearing at a precise moment corresponding to a biological age jump in the proposed mathematical models.

\medskip 
Due to microscopic cellular events, the health status of patients the life is first able to regenerate the tissue destroyed. When the chronological age increases, the patients are less capable of repairing such tissue, leading to phenotypic medical symptoms. The gravity of these medical symptoms is also more and more severe when patients' chronological age increases. The medical symptoms result from an accumulation at the microscopic level of cell deletions.

\medskip 
The determination of longevity is a problem that affects all species. The determining factors are multiple and are of two kinds, endogenous and exogenous. Among the endogenous determinants, we find, for example, i) the microbiome, whose importance has been demonstrated for species that have longevity comparable to the human species, such as crocodiles with a maximal life span of about 100 years \cite{Siddiqui} and ii) the genome, whose influence is decisive in family diseases such as progeria, a rare genetic disorder \cite{Talukder}. Progeria accelerates aging, reduces the maximal human life span, and presents early symptoms like osteoporosis and hair loss. The exogenous factors are due to the environment and come from the food, which often contains pesticides and oxidizing components, the stress (especially occupational stress and emotional stress), infectious diseases, chronic diseases (namely diabetes, neurodegenerative and cardiovascular diseases), cancer, accidental musculoskeletal trauma, and surgical operations, etc.)  \cite{Poganik}. For example, an acute viral disease can cause a loss of 3 109 cells in 1 day, equivalent to 2 weeks of additional aging, because normal aging causes only a loss of 2 108 cells per day \cite{Nguyen}. Another example is given by the pre-dementia, in which the observed loss of cells can be caused by multiple etiologies, among which the loss of limbic and hypothalamic cell connections contributing to altering the sense of satiety (causing the adipocytes depletion), alterations in cardiomyocytes insulin sensitivity, or age-related regulatory changes in carbohydrate metabolism of the liver cells \cite{Knopman}. The loss of cells due to apoptosis (not compensated by the mitoses remaining before the Hayflick's limit \cite{Demongeot}\cite{Hayflick}) in the different organs (brain, heart, liver, etc.) involved in these dysfunctions causes not the marginal organ death, but the whole organism death due to an involution (in cell size and number) of these organs incompatible with the survival of the whole organism.

\medskip 
As the accelerating aging factors, the rejuvenation factors also have two sources: the endogenous origin comes from cellular repair processes, in particular DNA repair mechanisms \cite{Kim}\cite{Stead} which prevent the abnormal apoptosis due to DNA damages (caused by radiations, abortive mitoses, etc.). The heterogeneous origin is due to recovery processes during the healing time after a disease or an exhausting physiological event like pregnancy \cite{Poganik}\cite{Murray-Sherrat}\cite{Kashdan}.

\medskip

The chronological age is the commonly used age which is the time since birth. To describe a more realistic lifetime expectancy, we now focus on the biological age, which is much more difficult to define since it is multi-factorial. 

\medskip 
According to Gerdhem et al. \cite{Gerdhem},  biological age is a commonly used term without clear definitions but is routinely used to describe patients. Biological age may differ from chronological age and correlates to gait, muscle strength, and balance. The general appearance is considered when estimating the biological age.

\medskip 
In Demongeot \cite{Demongeot}, the biological age is defined as the real age of interface tissues (like intestinal endothelium, alveoli epithelium, and skin epithelium) and differs from the chronological age classically used in demography, but unable to give useful information about the exact stage in development or aging an organ or more globally an organism has reached. Biological age is essentially determined by the number of divisions remaining before Hayflick's limit (the maximal number of mitoses for a cell lineage inside a dedicated differentiated cellular population) of the tissues of a critical organ (critical in the sense that its loss causes the death of the whole organism). We refer to Shay and Wright \cite{Shay-Wright} for more information. The critical organs have vital functions in interaction with the environment (protection, homeothermy, nutrition, and respiration) and present a rapid turnover (the total cell renewal time is in mice equal to 3 weeks for the skin, 1.5 days for the intestine, 4 months for the alveolate and 11 days for mitochondrial inner membrane) conditioning their biological age. 
\begin{figure}[H]
	\centering
	\includegraphics[scale=0.24]{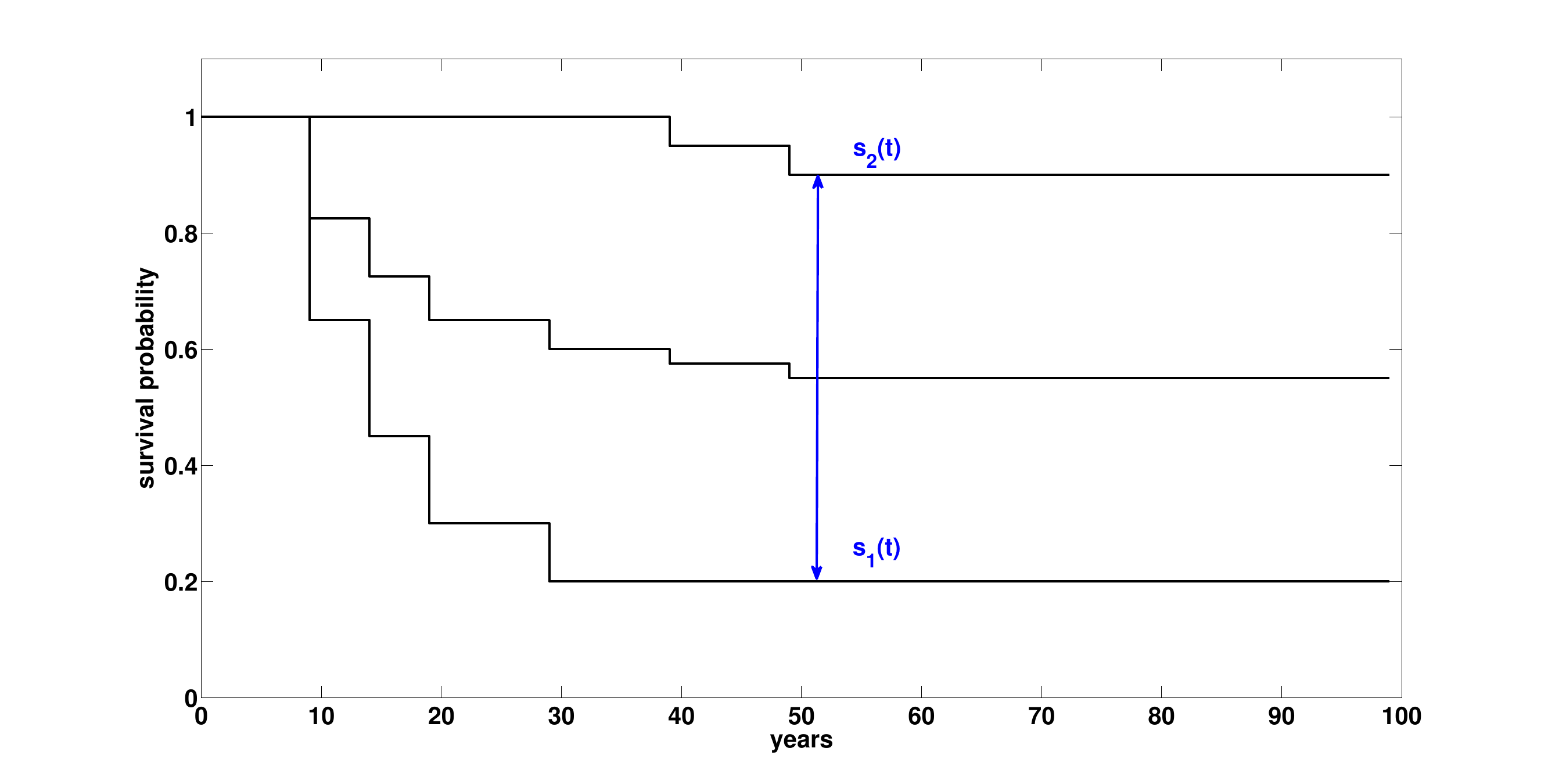}
	\caption{\textit{Kaplan-Meier survival confidence ($95\%$) curves. The region in between the curve $s_1(t)$ and the curve $s_2(t)$ corresponds to $95\%$ of individuals having their probability of survival to time $t$ between $s_1(t)$ and $s_2(t)$ ($95\%$ confidence limits).}}
	\label{Fig1}
\end{figure}

Another estimation of the biological age is the epigenetic age based on the expression level of certain genes like
ELMSAN1 (also known as MIDEAS) and their transcription factors (e.g., mir4505 for MIDEAS) \cite{Bernabeu}. Figure \ref{Fig2} gives the statistical 
relationship between the epigenetic age and the chronological age showing that the probability of observing backward or forward 
jumps in epigenetic age and the intensity of these jumps increase linearly with chronological age. We refer to \cite{JPH} and \cite{SL} for more results on this subject. 

\begin{figure}[H]
	\centering
	\includegraphics[scale=0.43]{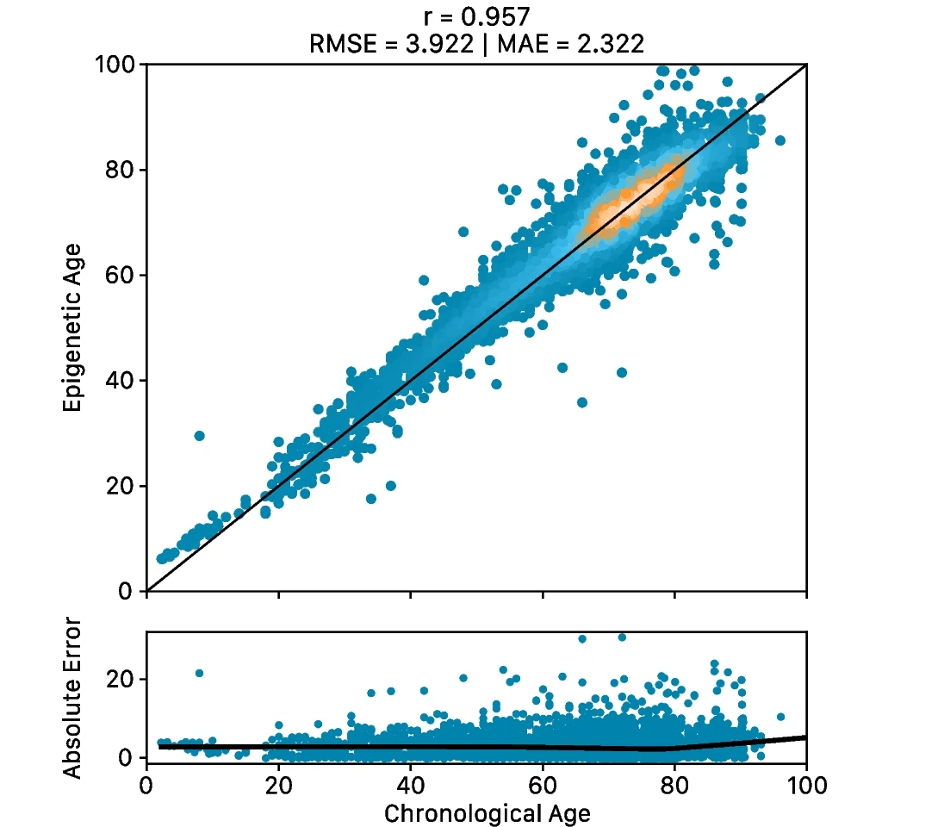}
	\caption{\textit{The statistical relationship between chronological and epigenetic ages (top) with an indication of absolute error (bottom) in the prediction of epigenetic age by the chronological one. This figure is taken from \cite{Bernabeu}.}}
	\label{Fig2}
\end{figure}

\section{Aging model with rejuvenation mechanism only}
\label{Section3}
To describe the rejuvenation of a population of individuals structured with respect to biological age $b \in [0,+ \infty)$, we consider $b \mapsto n(t,b)$ the population density at time $t$. That is 
$$
\int_{b_1}^{b_2} u(t,b) \d b
$$
is the number of individuals at time $t$ with biological age between $b_1$ and $b_2$. 

\medskip 
To model the rejuvenation, we will use the following system of partial differential equations 
\begin{equation}  \label{3.1}
	\left\lbrace
	\begin{array}{rl}
		\partial_t u(t,b)+\partial_b u(t,b) &=  - \tau_+ u(t,b)+ \tau_+ \left( 1+\delta_+ \right)  u\left(t, \left( 1+\delta_+ \right) b\right), \vspace{0.2cm}\\
		u(t,0)&=0, \vspace{0.2cm}\\
		u(0,b)&=u_0(b) \in L^1_+\left((0,\infty),\R \right),
	\end{array}
	\right.
\end{equation}
where $\tau_+>0$ is the rate of rejuvenation (i.e. $\dfrac{1}{\tau_+}$ is for a single individual,  the average time between two rejuvenations jumps), $\delta_+ \geq 0$ is the fraction of biological age $b$ (after rejuvenation) that should be added to $b$ to obtain $\widehat{b}= \left( 1+\delta_+ \right) b$ the biological age before rejuvenation. 

\medskip 
In this above model, the term $\partial_b u(t,b) $ corresponds to a drift term with a constant velocity (which represents the classical chronological aging in the absence of perturbations jumps), the term $ \tau_+  u(t,b) $ corresponds to the flow of individuals which rejuvenate  (i.e. having a jump in biological age) at time $t$. This flow is given by $ \tau_+ \int_{0}^{\infty} u(t,b) \d b $. That is, 
$$
\int_{t_1}^{t_2}  \tau_+ \int_{0}^{\infty} u(t,b) \d b \d t 
$$ 
is the number of individuals which rejuvenate    in between $t_1$ and $t_2$.

\medskip 
More precisely, when a rejuvenation occurs an individual having a biological age $b$ after a rejuvenation's jump,  its biological age was 
$$
\widehat{b}= \left( 1+\delta_+ \right) b>b +\delta_+ b>b,
$$
before the rejuvenation's jump.

\medskip 
In other words, a rejuvenating individual with a biological age $b$ after a rejuvenation's jump, was  an older individual with biological age   $\widehat{b}= \left( 1+\delta_+ \right) b$ before rejuvenation's jump. That is also equivalent to say that, rejuvenating individual starting from the biological age $\widehat{b}$  end-ups with a biological age $b= \dfrac{1}{1+\delta_+ } \widehat{b}$ after rejuvenation.  So this individual looses a fraction 
$$
1-\dfrac{1}{1+\delta_+ }=\dfrac{\delta_+ }{1+\delta_+ }
$$ 
of its biological age $\widehat{b}$ before rejuvenation's jump.  

\medskip 
Setting
\begin{equation} \label{3.2}
	g_+=1+\delta_+, 
\end{equation}
the system \eqref{3.1} becomes 
\begin{equation}  \label{3.3}
	\left\lbrace
	\begin{array}{rl}
		\partial_t u(t,b)+\partial_b u(t,b) &=  - \tau_+ u(t,b) + \tau_+ g_+ u\left(t, g_+ b\right),  \vspace{0.2cm}\\
		u(t,0)& =0 , \vspace{0.2cm}\\
		u(0,b)&=u_0(b) \in L^1_+\left((0,\infty),\R \right).
	\end{array}
	\right.
\end{equation}

\medskip 
\noindent \textbf{Integral formulation of the solutions: }
By integrating the first equation of \eqref{3.3} along the characteristics (i.e. $t-a$ constant) we obtain the following 
\begin{equation} \label{3.4}
	u(t,b)
	=	\left\{ 
	\begin{array}{rll}
		e^{- \tau_+ t} u_0(b-t)+&v(t,b), & \text{if } t <b, \vspace{0.3cm}\\
		&v(t,b), & \text{if } t>b,
	\end{array}
	\right. 
\end{equation} 
where 
\begin{equation} \label{3.5}
	v(t,b)=	\left\{ 
	\begin{array}{lll}
		\int_{0}^{t} 	e^{- \tau_+ \left(t-\sigma\right)}  \tau_+ g_+ u\big(  \sigma, g_+ \left(b-t +\sigma\right)\big) \d \sigma, & \text{if } t <b, \vspace{0.3cm}\\
		\int_{0}^{b} 	e^{- \tau_+ \left(b-\sigma\right)}  \tau_+ g_+ u\big( t-b+ \sigma, g_+ \sigma \big) \d \sigma, & \text{if } t>b.  
	\end{array}
	\right. 
\end{equation} 
As a consequence  of the above formula \eqref{3.4} and \eqref{3.5}, and by applying fixed argument in suitable subspaces of $L^1$  with compact supports, we obtain the following lemma. 
\begin{lemma}\label{LE3.1} If 
	$$
	{\rm Support} \left(u_0\right) \subset [0,b^{\star} ],
	$$
	then 
	$$
	{\rm Support} \left(u(t,.)\right) \subset [0,b^{\star}+t], \forall t>0. 
	$$
	
\end{lemma}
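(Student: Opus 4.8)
The plan is to read the integral formulation \eqref{3.4}--\eqref{3.5} as a fixed-point equation $u = \Psi(u)$ on a suitable Banach space and to show that the closed subspace encoding the support property is invariant under $\Psi$. Concretely, I fix $T>0$ and work in $E_T := C\left([0,T];L^1_+\left((0,\infty),\R\right)\right)$, where $\Psi(u)(t,b)$ is defined by the right-hand side of \eqref{3.4} with $v$ given by \eqref{3.5}. Since $\Psi$ is exactly the contraction (for $T$ small, or after inserting an exponential-in-time weight for arbitrary $T$) whose unique fixed point is the solution, it suffices to show that $\Psi$ preserves the set
$$
\mathcal{C}_T := \left\{ u \in E_T : u(t,b)=0 \text{ for a.e. } b > b^{\star}+t,\ \forall t\in[0,T] \right\}.
$$
This set is closed in $E_T$ (the condition $u(t,\cdot)=0$ a.e. on $(b^{\star}+t,\infty)$ passes to $L^1$-limits for each fixed $t$, and continuity in $t$ is preserved) and it contains $0$. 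Hence, if $\Psi(\mathcal{C}_T)\subset\mathcal{C}_T$, the unique fixed point of the contraction lies in $\mathcal{C}_T$, which is precisely the asserted support bound on $[0,T]$.

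Next I would verify the invariance $\Psi(\mathcal{C}_T)\subset\mathcal{C}_T$ by inspecting the two regimes of \eqref{3.4}--\eqref{3.5} for a fixed $u\in\mathcal{C}_T$. In the region $t>b$ there is nothing to prove, since $b<t\le b^{\star}+t$ already places $b$ inside the claimed support. The content lies in the region $t<b$. There the term $e^{-\tau_+ t}u_0(b-t)$ vanishes unless $b-t\in[0,b^{\star}]$, i.e. $b\le b^{\star}+t$. For the nonlocal contribution
$$
v(t,b)=\int_0^t e^{-\tau_+(t-\sigma)}\,\tau_+ g_+\, u\big(\sigma, g_+(b-t+\sigma)\big)\,\d\sigma ,
$$
the integrand is nonzero only when $g_+(b-t+\sigma)\le b^{\star}+\sigma$, because $u\in\mathcal{C}_T$. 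Using $g_+=1+\delta_+\ge 1$ and $\sigma\ge 0$, this rearranges to $g_+ b\le b^{\star}-(g_+-1)\sigma+g_+ t\le b^{\star}+g_+ t$, whence $b\le b^{\star}/g_+ + t\le b^{\star}+t$. Therefore $v(t,b)=0$ for every $b>b^{\star}+t$, and $\Psi(u)\in\mathcal{C}_T$.

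Finally, to pass from $[0,T]$ to all $t>0$, I would either take $T$ arbitrary (the weighted-norm contraction works on every bounded interval) or iterate: the solution at time $T$ lies in $L^1_+$ with support in $[0,b^{\star}+T]$, and restarting the argument with $b^{\star}+T$ in place of $b^{\star}$ gives support in $[0,(b^{\star}+T)+(t-T)]=[0,b^{\star}+t]$ for $t\ge T$. The main obstacle is the invariance computation above: one must confirm that the nonlocal rejuvenation term, which samples $u$ at the dilated age $g_+ b\ge b$, cannot create mass beyond $b^{\star}+t$. This is exactly where the sign condition $\delta_+\ge 0$ (equivalently $g_+\ge 1$) is essential — rejuvenation transports mass toward smaller biological ages, so only the unit-speed drift can advance the support, and it does so at rate one, yielding the bound $b^{\star}+t$. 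The remaining ingredients (closedness of $\mathcal{C}_T$ and the contraction estimate) are routine.
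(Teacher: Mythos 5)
Your proof is correct and follows essentially the route the paper itself indicates: the paper's one-line justification of Lemma \ref{LE3.1} is precisely ``use the integral formulation \eqref{3.4}--\eqref{3.5} and a fixed-point argument in suitable subspaces of $L^1$ with compact supports,'' which is what you carry out in detail, with the key (and correctly executed) step being the invariance computation showing that, since $g_+ = 1+\delta_+ \geq 1$, the nonlocal term samples $u$ at dilated ages $g_+(b-t+\sigma) \geq b-t+\sigma$ and hence cannot create mass beyond $b^{\star}+t$, leaving the unit-speed drift as the only mechanism advancing the support.
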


\medskip 
\noindent \textbf{Abstract Cauchy problem formulation:} We refer to \cite{Webb} \cite{Thieme} \cite{Cazenave} \cite{Engel-Nagel} \cite{Magal-Ruan} for more results on semigroup theory and their application to age structured models. We consider  
$$
X= L^1 \left(\left(0, \infty\right), \R \right),
$$
endowed its standard norm 
$$
\Vert \phi \Vert_{L^1}=  \int_{0}^{\infty}\vert \phi \left(\sigma \right)\vert \d \sigma . 
$$
We consider $A:D(A) \subset X \to X$ the linear operator defined by 
$$
A\phi= -\phi' 
$$ 
with 
$$
D(A)= \left\{ \phi \in W^{1,1} \left(\left(0, \infty\right)  , \R \right): \phi \left(0\right)=0 \right\}. 
$$
We consider $B: X \to X$ the bounded linear operator defined by 
$$
B_g \phi(x)=g \,  \phi \left( g \, x \right), \text{ for } x \geq 0.,
$$
where $g >0$. 
Then $B_g$ is an isometric bounded linear operator. That is  
$$
\Vert B_g\phi \Vert_{L^1}=\Vert \phi \Vert_{L^1}, \forall \phi \in L^1 \left(\left(0, \infty\right), \R \right). 
$$
The problem \eqref{3.3} can be reformulated as an abstract Cauchy problem 
\begin{equation} \label{3.6}
	\left\{ 
	\begin{array}{l}
		u'(t)=Au(t)-\tau_+u(t)+ \tau_+ B_{g_+}u(t), \text{ for } t \geq 0,\vspace{0.2cm} \\
		\text{with}\vspace{0.2cm} \\
		u(0)=u_0 \in L^1_+ \left(\left(0, \infty\right), \R \right).	
	\end{array}
	\right.
\end{equation}

\begin{lemma}
	The linear operator $A$ is the infinitesimal general of $\left\{T_A \left(t\right)\right\}_{t \geq 0} $ the strongly continuous semigroup of linear operators, defined by 
	\begin{equation} \label{3.7}
		T_A\left(t\right)\left( \phi\right) \left(a\right) =\left\{ 
		\begin{array}{ll}
			\phi\left(a-t\right), &\text{ if } a>t,  \vspace{0.2cm}\\
			0,& \text{ if } a<t. 
		\end{array}
		\right. 
	\end{equation}
\end{lemma}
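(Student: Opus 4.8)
The plan is to verify first that the explicit family $\{T_A(t)\}_{t\ge 0}$ given by \eqref{3.7} is a strongly continuous semigroup of linear contractions on $X$, and then to identify its infinitesimal generator with $A$ by means of a resolvent comparison. For the semigroup structure I would begin by noting that each $T_A(t)$ is in fact a linear isometry: the change of variable $s=a-t$ yields $\Vert T_A(t)\phi\Vert_{L^1}=\int_t^\infty |\phi(a-t)|\,\d a=\Vert\phi\Vert_{L^1}$, the zero-fill on $(0,t)$ costing no mass. The identity $T_A(0)=\mathrm{Id}$ is immediate, and the semigroup law $T_A(t)T_A(s)=T_A(t+s)$ follows by composing the two right shifts and tracking the support thresholds $a>s$, $a>t$, and $a>t+s$. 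Strong continuity I would prove by first establishing $\Vert T_A(t)\phi-\phi\Vert_{L^1}\to 0$ as $t\to 0^+$ for $\phi$ in the dense subspace $C_c^\infty\left((0,\infty)\right)$, where uniform continuity controls $\int_t^\infty|\phi(a-t)-\phi(a)|\,\d a$ and the boundary piece $\int_0^t|\phi|\,\d a$ vanishes, and then extending to all $\phi\in X$ via density together with the uniform bound $\Vert T_A(t)\Vert\le 1$.

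Let $A_0$ denote the infinitesimal generator of $\{T_A(t)\}_{t\ge 0}$. The next step is to show $A\subseteq A_0$. For $\phi\in D(A)$, that is $\phi\in W^{1,1}\left((0,\infty),\R\right)$ with $\phi(0)=0$, I would analyze the difference quotient pointwise: for $a>t$ one has $\tfrac{1}{t}\bigl(\phi(a-t)-\phi(a)\bigr)=-\tfrac{1}{t}\int_{a-t}^{a}\phi'(\sigma)\,\d\sigma\to -\phi'(a)$, while on $(0,t)$ the absolute continuity together with $\phi(0)=0$ gives $\phi(a)=\int_0^a\phi'(\sigma)\,\d\sigma$, which controls the term $-\phi(a)/t$. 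A Fubini and dominated-convergence estimate then upgrades this to the $L^1$ convergence $\bigl\Vert \tfrac{1}{t}\left(T_A(t)\phi-\phi\right)+\phi'\bigr\Vert_{L^1}\to 0$, so that $\phi\in D(A_0)$ and $A_0\phi=-\phi'=A\phi$.

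To promote this inclusion to an equality I would use that the generator of a contraction semigroup satisfies $(0,\infty)\subset\rho(A_0)$, so it suffices to check that $\lambda-A$ is itself a bijection of $D(A)$ onto $X$ for some $\lambda>0$. Solving $\lambda\phi+\phi'=\psi$ with $\phi(0)=0$ produces the explicit solution $\phi(a)=\int_0^a e^{-\lambda(a-\sigma)}\psi(\sigma)\,\d\sigma$, which lies in $W^{1,1}$, vanishes at $0$, and satisfies $\Vert\phi\Vert_{L^1}\le\lambda^{-1}\Vert\psi\Vert_{L^1}$ by Fubini; hence $\lambda-A$ is bijective. Given any $\phi\in D(A_0)$, I would then set $\psi=(\lambda-A_0)\phi$, solve $(\lambda-A)\widetilde\phi=\psi$ with $\widetilde\phi\in D(A)$, and use $A\subseteq A_0$ to deduce $(\lambda-A_0)\widetilde\phi=(\lambda-A_0)\phi$; injectivity of $\lambda-A_0$ forces $\widetilde\phi=\phi$, so $\phi\in D(A)$. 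This gives $D(A_0)\subseteq D(A)$ and therefore $A_0=A$.

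I expect the main obstacle to be the $L^1$ convergence of the difference quotient near the boundary $a=0$, where the zero-fill on $(0,t)$ and the constraint $\phi(0)=0$ must be handled carefully; the algebraic semigroup identities, the isometry property, and the resolvent computation are essentially routine. As an alternative I would note that one may bypass the direct identification and instead invoke the Hille--Yosida theorem of \cite{Engel-Nagel}\cite{Magal-Ruan}: density of $D(A)$ (it contains $C_c^\infty\left((0,\infty)\right)$), closedness of $A$, and the estimate $\Vert(\lambda-A)^{-1}\Vert\le 1/\lambda$ established above show that $A$ generates a contraction semigroup, which the uniqueness of semigroups with a prescribed generator identifies with the explicit family $\{T_A(t)\}_{t\ge 0}$.
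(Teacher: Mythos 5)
Your proof is correct, but note that the paper itself offers no proof of this lemma at all: it is stated as a known fact, with the earlier citations (Webb, Engel--Nagel, Magal--Ruan, etc.) serving as the implicit justification, since the right-translation semigroup on $L^1\left((0,\infty),\R\right)$ with generator $-\frac{\dd}{\dd a}$ on the domain $\left\{\phi\in W^{1,1}\left((0,\infty),\R\right):\phi(0)=0\right\}$ is a textbook example. Your argument supplies exactly the details the paper delegates to the literature, and it is the standard self-contained route: isometry and the semigroup law by change of variables, strong continuity by density of $C_c^\infty\left((0,\infty)\right)$ plus the uniform bound $\Vert T_A(t)\Vert\le 1$, the inclusion $A\subseteq A_0$ via the $L^1$ difference quotient --- where your treatment of the boundary strip $(0,t)$ using $\phi(a)=\int_0^a\phi'(\sigma)\,\dd\sigma$ is the one genuinely delicate point, and is handled correctly --- and the upgrade to equality by combining surjectivity of $\lambda-A$ with injectivity of $\lambda-A_0$. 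This ``extension plus resolvent'' device is precisely how one avoids computing $D(A_0)$ directly, and what it buys is a complete proof at the cost of roughly a page, whereas the paper's choice buys brevity for a classical fact.

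One caveat on your closing alternative: Hille--Yosida plus uniqueness of the generated semigroup shows that $A$ generates \emph{some} contraction semigroup $S(t)$, but identifying $S(t)$ with the explicit family $\left\{T_A(t)\right\}_{t\ge 0}$ still requires showing either that the generator of $T_A(\cdot)$ is $A$ (the original claim) or that $t\mapsto T_A(t)\phi$ solves the abstract Cauchy problem for every $\phi\in D(A)$; as stated, that route is mildly circular. The clean way to close that loop is to compare your explicit resolvent $\left((\lambda-A)^{-1}\psi\right)(a)=\int_0^a e^{-\lambda(a-\sigma)}\psi(\sigma)\,\dd\sigma$ with the Laplace transform $\int_0^\infty e^{-\lambda t}T_A(t)\psi\,\dd t$, which a direct computation shows are equal; since the resolvent of a generator is the Laplace transform of its semigroup, this identifies $A_0$ with $A$ and could even replace your difference-quotient step entirely.
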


\begin{definition}
	We will say that $u \in C\left(\left[ 0, \infty\right),  L^1_+ \left(\left(0, \infty\right), \R \right)\right)$ is a \textbf{mild solution} of  \eqref{3.6} if 
	$$
	\int_0^t u(\sigma) \d \sigma \in D(A), \forall t \geq 0, 
	$$
	and 
	$$
	u(t)= u_0 + A \int_{0}^{t}u(\sigma) \d \sigma+  \int_{0}^{t}-\tau_+u(\sigma)+ \tau_+ B_{g_+}u(\sigma) \d \sigma, \forall t \geq 0.
	$$
\end{definition}
\begin{theorem} \label{TH3.4} For each $u_0  \in L^1_+ \left(\left(0, \infty\right), \R \right),$ the Cauchy problem \eqref{3.6} admits a unique mild solution which is the unique continuous function $u \in C\left(\left[ 0, \infty\right),  L^1_+ \left(\left(0, \infty\right), \R \right)\right)$ satisfying the fixed point problem 
	
	\begin{equation} \label{3.8}
		u(t)= T_{A-\tau_+I}(t)u_0+ \int_{0}^{t} T_{A-\tau_+I}(t-\sigma)\tau_+ B_{g_+}u(\sigma) \d \sigma, \forall t \geq 0,
	\end{equation}
	where 
	\begin{equation} \label{3.9}
		T_{A-\tau_+I}(t)=e^{-\tau_+ t}T_{A}(t) ,  \forall t \geq 0. 
	\end{equation}
\end{theorem}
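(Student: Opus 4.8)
The plan is to view \eqref{3.6} as a bounded perturbation of the $C_0$-semigroup generated by $A-\tau_+ I$, and to solve the resulting Volterra fixed point equation \eqref{3.8} by a contraction argument. First I would establish \eqref{3.9}. Since the previous lemma gives that $A$ generates the (shift) $C_0$-semigroup $\{T_A(t)\}_{t\ge 0}$, and since $-\tau_+ I$ is a bounded scalar multiple of the identity, a direct computation (or the standard rescaling lemma for generators) shows that $A-\tau_+ I$ generates $\{e^{-\tau_+ t} T_A(t)\}_{t \ge 0}=:\{T_{A-\tau_+ I}(t)\}_{t\ge0}$. Because $T_A(t)$ preserves the $L^1$-norm and $e^{-\tau_+ t}\le 1$, we have $\|T_{A-\tau_+ I}(t)\|_{\mathcal{L}(X)}\le 1$ for all $t\ge 0$; moreover $T_{A-\tau_+ I}(t)$ is a positive operator on $X$.

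Second, I would show that a function $u\in C([0,\infty),X)$ is a mild solution in the sense of the definition above if and only if it satisfies \eqref{3.8}. Rewriting the integral identity of the definition as
$$u(t)=u_0+(A-\tau_+ I)\int_0^t u(\sigma)\,\d\sigma+\int_0^t \tau_+ B_{g_+}u(\sigma)\,\d\sigma,$$
which is legitimate because $-\tau_+ I$ is bounded and $\int_0^t u(\sigma)\,\d\sigma\in D(A)=D(A-\tau_+ I)$, the claim becomes exactly the classical equivalence between the integrated formulation of an abstract Cauchy problem with generator $A-\tau_+ I$ and forcing term $f(\sigma)=\tau_+ B_{g_+}u(\sigma)$, and its variation-of-constants representation. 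This equivalence is standard for generators of $C_0$-semigroups, and I would invoke the references on semigroup theory cited before the definition.

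Finally, I would solve \eqref{3.8} by Banach's fixed point theorem. On $C([0,T],X)$ define $\Phi(u)(t)$ to be the right-hand side of \eqref{3.8}. Using $\|T_{A-\tau_+ I}(t-\sigma)\|_{\mathcal{L}(X)}\le 1$ and $\|\tau_+ B_{g_+}\|_{\mathcal{L}(X)}=\tau_+$ (recall that $B_{g_+}$ is an isometry), one obtains
$$\|\Phi(u)(t)-\Phi(v)(t)\|_{L^1}\le \tau_+\int_0^t \|u(\sigma)-v(\sigma)\|_{L^1}\,\d\sigma,$$
so $\Phi$ is a strict contraction on $C([0,T],X)$ as soon as $\tau_+ T<1$. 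This yields a unique fixed point on $[0,T]$, and iterating on consecutive intervals of the same length extends it uniquely to $[0,\infty)$ (equivalently, the Picard iterates converge globally since the iterated kernels are controlled by $(\tau_+ t)^n/n!$). Positivity then follows because $u_0\ge 0$ and both $T_{A-\tau_+ I}(t)$ and $\tau_+ B_{g_+}$ map $L^1_+$ into $L^1_+$, so every Picard iterate stays in the closed cone $L^1_+$, whence $u(t)\in L^1_+((0,\infty),\R)$ for all $t\ge 0$.

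The main obstacle is the rigorous equivalence between the integrated (mild) formulation of the definition and the variation-of-constants formula \eqref{3.8}; once this is in hand, existence, uniqueness and positivity are routine consequences of the boundedness of $\tau_+ B_{g_+}$ together with the contractivity and positivity of $T_{A-\tau_+ I}(t)$.
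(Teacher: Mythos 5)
Your proof is correct and takes essentially the approach the paper itself relies on: the paper states Theorem \ref{TH3.4} without a written proof, deferring to the standard semigroup references cited earlier and to Remark \ref{REM3.5} (which identifies \eqref{3.8} with the characteristic formulas \eqref{3.4}--\eqref{3.5}), and your argument --- the rescaling identity \eqref{3.9}, the classical equivalence between the integrated (mild) formulation with bounded perturbation $\tau_+ B_{g_+}$ and the variation-of-constants formula, the Banach fixed point/Picard iteration with the $(\tau_+ t)^n/n!$ bound, and positivity via positive iterates in the closed cone $L^1_+$ --- is precisely the bounded-perturbation argument those references supply. Nothing is missing.
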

\begin{remark} \label{REM3.5}
	The  variation of constant formula  \eqref{3.8} is an abstract reformulation of the  formulas \eqref{3.4} and   \eqref{3.5}.  
\end{remark}
\medskip 
\noindent \textbf{Moments' formulation of the PDE:} 
Define 
$$
E_k(u(t))=\int_{0}^{\infty} \sigma^k u(t, \sigma) \d \sigma, \forall k \geq 0, 
$$
with 
$$
E_0(u(t))=\int_{0}^{\infty}  u(t, \sigma) \d \sigma,
$$
assuming that the integrals are well defined. 
\begin{theorem} \label{TH3.6}The rejuvenation model \eqref{3.1} has a unique non-negative mild solution.  We have  
	\begin{equation} \label{3.10}
		\frac{d}{dt}	E_0(u(t)) =0, 
	\end{equation}
	and the model preserves the total mass (number) of individuals. That is 
	\begin{equation}\label{3.11}
		\int_{0}^{\infty} u(t, \sigma) \d \sigma=  	\int_{0}^{\infty} u_0(\sigma) \d \sigma, \forall t \geq 0, 
	\end{equation}
	Moreover, the higher moment satisfies the following system of ordinary differential equations 
	\begin{equation}\label{3.12}
		\frac{d}{dt}	E_k(u(t)) =k \, E_{k-1}(u(t))- \chi_k \, E_{k}(u(t)), \forall k \geq 1, 
	\end{equation}
	where 
	\begin{equation*}
		\chi_k= \tau_+  \left(1-\dfrac{1}{\left(1+\delta_+\right)^k}\right)>0, \forall k \geq 1, 
	\end{equation*}
	and 
	\begin{equation*}
		\lim_{k \to +\infty} 	\chi_k= \tau_+>0. 
	\end{equation*}
	If we denote the equilibrium solution of \eqref{3.12}  by 
	\begin{equation*}
		\overline{E}_k=\left(  \prod_{j=1}^{k}  \dfrac{j}{\chi_j}  \right)\int_{0}^{\infty} u_0(\sigma) \d \sigma,  \forall k \geq 1,
	\end{equation*}
	consequently  from \eqref{3.12},  we have the following convergence result 
	\begin{equation*}
		\lim_{t \to \infty} E_k(u(t)) =\overline{E}_k,  \forall k \geq 1.
	\end{equation*}
\end{theorem}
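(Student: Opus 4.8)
The plan is to establish the four claims in turn — non-negativity of the mild solution, conservation of mass, the moment ODE system \eqref{3.12}, and convergence to equilibrium — building throughout on Theorem \ref{TH3.4} and Lemma \ref{LE3.1}. For non-negativity I would exploit the variation of constants formula \eqref{3.8}. The semigroup $T_A(t)$ in \eqref{3.7} is a pure shift, hence positivity-preserving, and multiplying by $e^{-\tau_+ t}$ in \eqref{3.9} preserves this; moreover $B_{g_+}$ maps $L^1_+$ into $L^1_+$ since $B_{g_+}\phi(x)=g_+\phi(g_+ x)\geq 0$ whenever $\phi\geq 0$. Consequently the Picard iterates associated with the fixed-point problem \eqref{3.8} remain non-negative for $u_0\in L^1_+$, and since they converge in $C([0,\infty),L^1)$ to the unique mild solution, the limit is non-negative.

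To obtain the moment equations I would multiply the first equation of \eqref{3.1} by $b^k$ and integrate over $b\in(0,\infty)$. The transport term is handled by integration by parts, giving $\int_0^\infty b^k\partial_b u\,\d b=-k\,E_{k-1}(u(t))$ for $k\geq 1$, the boundary contributions vanishing thanks to the boundary condition $u(t,0)=0$ and the compact-support property from Lemma \ref{LE3.1}. The nonlocal gain term is treated by the change of variables $c=g_+ b$, which yields $\tau_+ g_+\int_0^\infty b^k u(t,g_+ b)\,\d b=\tau_+ g_+^{-k}E_k(u(t))$. Collecting these identities produces \eqref{3.12} with $\chi_k=\tau_+(1-g_+^{-k})=\tau_+(1-(1+\delta_+)^{-k})$; the case $k=0$ gives $\chi_0=0$ and hence \eqref{3.10}--\eqref{3.11}, while $g_+>1$ (assuming $\delta_+>0$) gives both $\chi_k>0$ and $\chi_k\to\tau_+$ as $k\to\infty$.

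For the equilibrium and the convergence statement I would use that the system \eqref{3.12} is lower-triangular: $E_0$ is constant, and each $E_k$ solves a scalar linear ODE forced by $E_{k-1}$. Variation of constants gives $E_k(t)=e^{-\chi_k t}E_k(0)+k\int_0^t e^{-\chi_k(t-s)}E_{k-1}(s)\,\d s$. I would then argue by induction on $k$, with base case $E_0(t)\equiv\overline{E}_0=\int_0^\infty u_0$. Using the elementary fact that whenever $f(s)\to L$ and $\chi_k>0$ one has $\int_0^t e^{-\chi_k(t-s)}f(s)\,\d s\to L/\chi_k$, together with $e^{-\chi_k t}E_k(0)\to 0$, the inductive step yields $E_k(t)\to k\,\overline{E}_{k-1}/\chi_k$, which is precisely the recursion $\overline{E}_k=(k/\chi_k)\overline{E}_{k-1}$; unrolling it gives the product formula for $\overline{E}_k$.

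The step I expect to be the main obstacle is the rigorous justification of the moment computation for a merely mild solution, since both the integration by parts and the exchange of $\tfrac{d}{dt}$ with $\int_0^\infty b^k(\cdot)\,\d b$ presuppose regularity and finiteness of moments that a mild solution need not enjoy a priori. I would circumvent this by first proving \eqref{3.12} for initial data $u_0\in D(A)$ with compact support, where the mild solution is classical and, by Lemma \ref{LE3.1}, compactly supported for every $t$, so all moments are finite and differentiable. I would then extend to general $u_0\in L^1_+$ by a density argument, relying on the continuous dependence of the solution on $u_0$ in $L^1$ afforded by the linear structure of \eqref{3.8} and a Gronwall estimate, together with a uniform control of moments on compact time intervals.
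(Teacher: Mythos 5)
Your proposal is correct, and its core --- multiplying \eqref{3.1} by $b^k$, integrating by parts in the transport term, and changing variables $c=g_+b$ in the nonlocal term to obtain \eqref{3.12} with $\chi_k=\tau_+\left(1-g_+^{-k}\right)$ --- is exactly the paper's own proof, which consists of that computation and nothing more. The extra material you supply (non-negativity via positivity-preserving Picard iterates for \eqref{3.8}, the induction/variation-of-constants argument giving $E_k(t)\to\overline{E}_k$, and the density argument justifying the formal computation for mild solutions) is sound and fills in steps the paper leaves implicit, including the convergence claim it states without proof.
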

\begin{proof}
	\begin{equation*}
		\begin{array}{rl}
			\displaystyle		\int_{0}^{\infty} \sigma^k \partial_t  u(t, \sigma) \d \sigma+	\int_{0}^{\infty} \sigma^k   \partial_b u(t, \sigma) \d \sigma &=  - \tau_+ 	\int_{0}^{\infty}  \sigma^k u(t, \sigma) \d \sigma \\
			&+ \tau_+  	\int_{0}^{\infty} \sigma^k u\left(t, g_+  \sigma \right)   g_+   \d \sigma,
		\end{array}		
	\end{equation*}
	by integrating by parts the second integral, and by making a change of variable in the last integral, we obtain 
	\begin{equation*}
		\frac{d}{dt}	E_k(u(t)) =k \, E_{k-1}(u(t))-  \tau_+  \left(1-\dfrac{1}{\left(1+\delta_+\right)^k}\right)	\int_{0}^{\infty}  E_k(u(t)). 
	\end{equation*}
\end{proof}
\section{Aging model with premature aging mechanism only}
\label{Section4}
To model the premature aging, we will use the following system of partial differential equations 
\begin{equation}  \label{4.1}
	\left\lbrace
	\begin{array}{rl}
		\partial_t u(t,b)+\partial_b u(t,b) &=  - \tau_- u(t,b)+ \tau_- \left( 1-\delta_- \right)  u\left(t, \left( 1-\delta_- \right) b\right),  \vspace{0.2cm}\\
		u(t,0)&=0, \vspace{0.2cm}\\
		u(0,b)&=u_0(b) \in L^1_+\left((0,\infty),\R \right),
	\end{array}
	\right.
\end{equation}
where $\tau_->0$ is the rate of premature aging (i.e. $\dfrac{1}{\tau_-}$ is for a single individual, the average time between two premature aging jumps), $\delta_- \in ( 0, 1)$ is the fraction of biological age $b$ (after premature aging) that should be subtracted to $b$ to obtain $\widehat{b}= \left( 1-\delta_- \right) b$ the biological age before premature aging. 

\medskip 
In this above model, the flow of individuals with premature aging  (i.e. having a jump in biological age) at time $t$ is given by $ \tau_- \int_{0}^{\infty} u(t,b) \d b $. That is, 
$$
\int_{t_1}^{t_2}  \tau_- \int_{0}^{\infty} u(t,b) \d b \d t 
$$ 
is the number of individual with premature aging  in between $t_1$ and $t_2$. 

\medskip 
More precisely, when a rejuvenation occurs an individual having a biological age $b$ after a premature aging's jump, its biological age was 
$$
\widehat{b}= \left( 1-\delta_- \right) b>b -\delta_- b<b. 
$$
before the premature aging's jump. 

\medskip 
In other words, a premature aging individual with a biological age $b$ after a premature aging's jump, was  an younger individual with biological age   $\widehat{b}= \left( 1-\delta_- \right) b$ before premature aging's jump. That is also equivalent to say that, premature aging individual starting from the biological age $\widehat{b}$  end-ups with a biological age $b= \dfrac{1}{1-\delta_- } \widehat{b}$ after premature aging.  So this individual gains a fraction 
$$
\dfrac{1}{1-\delta_- }-1=\dfrac{\delta_- }{1-\delta_- }
$$ 
of its biological age $\widehat{b}$ before premature aging's jump. 

\medskip 
Setting
\begin{equation} \label{4.2}
	g_-=1-\delta_-, 
\end{equation}
the system \eqref{4.1} becomes 
\begin{equation}  \label{4.3}
	\left\lbrace
	\begin{array}{rl}
		\partial_t u(t,b)+\partial_b u(t,b) &=  - \tau_- u(t,b)+ \tau_- g_- u\left(t, g_- b\right), \vspace{0.2cm}\\
		u(t,0)&=0, \vspace{0.2cm}\\
		u(0,b)&=u_0(b) \in L^1_+\left((0,\infty),\R \right).
	\end{array}
	\right.
\end{equation}
\section{Aging model}
\label{Section5}
The full model with both rejuvenation and premature aging reads as follows 
\begin{equation}  \label{5.1}
	\left\lbrace
	\begin{array}{rl}
		\partial_t u(t,b)+\partial_b u(t,b) &= \ - \left(\tau_-+\tau_+ \right) u(t,b) \vspace{0.2cm}\\
		&\quad + \tau_- \, g_-  \,  u\left(t, g_- \, b\right) \vspace{0.2cm}\\
		&\quad +\tau_+ \,  g_+ \,  u\left(t, g_+ \, b\right), \vspace{0.2cm}\\
		u(t,0)&=0, \vspace{0.2cm}\\
		u(0,b)&=u_0(b) \in L^1_+\left((0,\infty),\R \right).
	\end{array}
	\right.
\end{equation}
and we make the following assumption on the parameters of the system.
\begin{assumption} \label{ASS5.1} We assume that 
	$$
	\tau_+>0,  \, \tau_->0, \text{ and  } g_+ >1 >g_- >0.
	$$ 
\end{assumption}

\medskip 
\noindent \textbf{Volterra integral formulation:} By integrating the first equation of \eqref{5.1} along the characteristics (i.e. $t-a$ constant) we obtain the following 
\begin{equation*} \label{5.3}
	u(t,b)
	=	\left\{ 
	\begin{array}{lll}
		e^{- \left(\tau_-+\tau_+ \right) t} u_0(b-t)+&v_+(t,b)+v_-(t,b), & \text{if } t <b, \vspace{0.3cm}\\
		&v_+(t,b)+v_-(t,b), & \text{if } t>b,
	\end{array}
	\right. 
\end{equation*} 
where 
\begin{equation*} \label{5.4}
	{\small
		v_+(t,b)=	\left\{ 
		\begin{array}{lll}
			\int_{0}^{t} 	e^{- \left(\tau_-+\tau_+ \right)  \left(t-\sigma\right)}  \tau_+ g_+ u\big(  \sigma, g_+ \left(b-t +\sigma\right)\big) \d \sigma, & \text{if } t <b, \vspace{0.3cm}\\
			\int_{0}^{b} 	e^{- \left(\tau_-+\tau_+ \right)  \left(b-\sigma\right)}  \tau_+ g_+ u\big( t-b+ \sigma, g_+ \sigma \big) \d \sigma, & \text{if } t>b, 
		\end{array}
		\right. 
	}
\end{equation*} 
and
\begin{equation*} \label{5.5}
	{\small
		v_-(t,b)=	\left\{ 
		\begin{array}{lll}
			\int_{0}^{t} 	e^{- \left(\tau_-+\tau_+ \right)  \left(t-\sigma\right)}  \tau_- g_- u\big(  \sigma, g_- \left(b-t +\sigma\right)\big) \d \sigma, & \text{if } t <b, \vspace{0.3cm}\\
			\int_{0}^{b} 	e^{- \left(\tau_-+\tau_+ \right)  \left(b-\sigma\right)}  \tau_- g_- u\big( t-b+ \sigma, g_- \sigma \big) \d \sigma, & \text{if } t>b.  
		\end{array}
		\right. 
	}
\end{equation*}

\medskip 
\noindent \textbf{Abstract Cauchy problem reformulation:} The problem \eqref{5.1} can be reformulated as an abstract Cauchy problem 
\begin{equation} \label{5.2}
	\left\{ 
	\begin{array}{l}
		u'(t)=Au(t)-\left(\tau_-+\tau_+\right) u(t)+ \left(\tau_- B_{g_-}+ \tau_+ B_{g_+}\right)u(t), \text{ for } t \geq 0,\vspace{0.2cm} \\
		\text{with}\vspace{0.2cm} \\
		u(0)=u_0 \in L^1_+ \left(\left(0, \infty\right), \R \right).	
	\end{array}
	\right.
\end{equation}


\begin{definition} \label{DE5.2}
	We will say that $u \in C\left(\left[ 0, \infty\right),  L^1_+ \left(\left(0, \infty\right), \R \right)\right)$ is a \textbf{mild solution} of  \eqref{5.2} if 
	$$
	\int_0^t u(\sigma) \d \sigma \in D(A), \forall t \geq 0, 
	$$
	and for each $t \geq 0,$
	$$
	u(t)= u_0 + A \int_{0}^{t}u(\sigma) \d \sigma+  \int_{0}^{t}-\left(\tau_-+\tau_+\right) u(\sigma)+ \ \left(\tau_- B_{g_-}+ \tau_+ B_{g_+}\right)u(\sigma) \d \sigma.
	$$
\end{definition}
\begin{theorem} \label{TH5.3} Let Assumption \ref{ASS5.1} be satisfied. For each $u_0  \in L^1_+ \left(\left(0, \infty\right), \R \right),$ the Cauchy problem \eqref{5.2} admits a unique mild solution  $u \in C\left(\left[ 0, \infty\right),  L^1_+ \left(\left(0, \infty\right), \R \right)\right)$ which is the unique continuous function satisfying the fixed point problem for each $t \geq 0$, 
	
	\begin{equation*} \label{5.7}
		\begin{array}{rl}
			u(t)= & \quad 	T_{A-\left(\tau_-+\tau_+\right) I}(t)u_0 \vspace{0.2cm} \\
			&+ \int_{0}^{t} 	T_{A-\left(\tau_-+\tau_+\right) I}(t-\sigma) \left(\tau_- B_{g_-}+ \tau_+ B_{g_+}\right)u(\sigma) \d \sigma,  	 
		\end{array}		
	\end{equation*}
	where 
	\begin{equation*} \label{5.8}
		T_{A-\left(\tau_-+\tau_+\right) I}(t)=e^{-\left(\tau_-+\tau_+\right) t}T_{A}(t) ,  \forall t \geq 0. 
	\end{equation*}
\end{theorem}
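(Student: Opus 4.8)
The plan is to treat \eqref{5.2} exactly as a bounded perturbation of the shifted generator $A-(\tau_-+\tau_+)I$, mirroring the argument behind Theorem \ref{TH3.4}. Set $c=\tau_-+\tau_+>0$ and $K=\tau_- B_{g_-}+\tau_+ B_{g_+}$. First I would record that, since $A$ generates $\{T_A(t)\}_{t\ge0}$ given by \eqref{3.7}, the operator $A-cI$ generates the rescaled semigroup $T_{A-cI}(t)=e^{-ct}T_A(t)$, which is a positive contraction semigroup: the shift $T_A$ is a positive isometry on $X$ and $e^{-ct}\le 1$. Next, because each $B_g$ is an isometry with $\Vert B_g\phi\Vert_{L^1}=\Vert\phi\Vert_{L^1}$, the operator $K$ is a bounded linear operator on $X$ with $\Vert K\Vert\le \tau_-+\tau_+$, and it is positive, mapping $L^1_+$ into $L^1_+$ since $g_\pm>0$ and $B_{g_\pm}$ preserves non-negativity.

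With these facts in hand, I would invoke the bounded perturbation theorem for strongly continuous semigroups \cite{Engel-Nagel,Magal-Ruan}: the operator $A-cI+K$ generates a strongly continuous semigroup $\{T_L(t)\}_{t\ge0}$, and the unique mild solution of \eqref{5.2} is $u(t)=T_L(t)u_0$, characterized by the variation of constants (fixed point) formula displayed in the statement. Concretely, existence and uniqueness of that fixed point follow from a contraction-mapping argument: on $C([0,T],X)$ the map $\Phi(u)(t)=T_{A-cI}(t)u_0+\int_0^t T_{A-cI}(t-\sigma)Ku(\sigma)\,\d\sigma$ satisfies $\sup_{[0,T]}\Vert\Phi(u)(t)-\Phi(v)(t)\Vert_{L^1}\le (\tau_-+\tau_+)\,T\,\sup_{[0,T]}\Vert u(t)-v(t)\Vert_{L^1}$, using $\Vert T_{A-cI}(t-\sigma)\Vert\le 1$. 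Choosing $T$ small yields a strict contraction, and since the problem is linear the local solution extends to all $t\ge0$; equivalently, the weighted norm $\sup_{t\ge0}e^{-\lambda t}\Vert u(t)\Vert_{L^1}$ with $\lambda>\tau_-+\tau_+$ makes $\Phi$ a global contraction at once.

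It then remains to verify invariance of the positive cone, i.e. $u(t)\in L^1_+$ whenever $u_0\in L^1_+$. I would read this off the successive-approximation (Dyson--Phillips) representation $T_L(t)=\sum_{n\ge0}S_n(t)$, with $S_0(t)=T_{A-cI}(t)$ and $S_{n+1}(t)=\int_0^t T_{A-cI}(t-\sigma)KS_n(\sigma)\,\d\sigma$: since both $T_{A-cI}(t)$ and $K$ are positive operators, every $S_n(t)$ is positive, the series converges in operator norm, so $T_L(t)$ is positive and $u(t)=T_L(t)u_0\ge0$. Equivalently, the Picard iterates $u^{(0)}(t)=T_{A-cI}(t)u_0$ and $u^{(n+1)}=\Phi(u^{(n)})$ all lie in the closed, $\Phi$-invariant cone $L^1_+$, so their limit $u$ inherits non-negativity. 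The equivalence of this fixed point with the mild-solution notion of Definition \ref{DE5.2} is standard semigroup theory, and Remark \ref{REM3.5} then applies verbatim to identify \eqref{5.2} with the Volterra formulation.

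The only genuinely non-routine point is the invariance of $L^1_+$: the term $-cI$ is order-decreasing, so positivity is not visible termwise and must instead be extracted from the Dyson--Phillips series, where the $-cI$ contribution is absorbed into the positive factor $T_{A-cI}(t)=e^{-ct}T_A(t)$. Everything else is a direct transcription of the single-mechanism computation in Theorem \ref{TH3.4}, now with the single bounded positive perturbation $\tau_+B_{g_+}$ replaced by the sum $K=\tau_-B_{g_-}+\tau_+B_{g_+}$ of two bounded positive operators.
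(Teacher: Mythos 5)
Your proposal is correct and follows exactly the route the paper (implicitly) takes: the paper states Theorem \ref{TH5.3} without a written proof, relying on the standard bounded-perturbation semigroup framework it cites \cite{Webb,Engel-Nagel,Magal-Ruan}, which is precisely your argument --- $A-(\tau_-+\tau_+)I$ generates $e^{-(\tau_-+\tau_+)t}T_A(t)$, the sum $\tau_-B_{g_-}+\tau_+B_{g_+}$ is a bounded positive perturbation, the fixed point is obtained by contraction, and positivity comes from the Dyson--Phillips series. Your write-up simply supplies the standard details (including the positivity argument, which is the one subtle point) that the paper leaves to the references, mirroring the single-jump case of Theorem \ref{TH3.4} and Remark \ref{REM3.5}.
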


\medskip 
\noindent \textbf{Moments formulation:} We obtain the following result using similar arguments in for Theorem  \ref{TH3.6}. 
\begin{theorem} \label{TH5.4} Let Assumption \ref{ASS5.1} be satisfied.  The rejuvenation and premature aging model \eqref{5.1} has a unique non-negative mild solution.  We have  
	\begin{equation*} \label{5.9}
		\frac{d}{dt}	E_0(u(t)) =0, 
	\end{equation*}
	and the model preserves the total mass (number) of individuals. That is 
	\begin{equation} \label{5.3}
		\int_{0}^{\infty} u(t, \sigma) \d \sigma=  	\int_{0}^{\infty} u_0(\sigma) \d \sigma, \forall t \geq 0. 
	\end{equation}
	Moreover, the higher moment satisfies the following system of ordinary differential equations 
	\begin{equation}  \label{5.4}
		\frac{d}{dt}	E_k(u(t)) =k \, E_{k-1}(u(t))- \chi_k \, E_{k}(u(t)), \forall k \geq 1, 
	\end{equation}
	where
	\begin{equation} \label{5.5}
		\chi_k= \tau_+  \left(1-\dfrac{1}{\left(1+\delta_+\right)^k}\right)+\tau_-  \left(1-\dfrac{1}{\left(1-\delta_-\right)^k}\right), \forall k \geq 1,
	\end{equation}
	and 
	\begin{equation} \label{5.6}
		\lim_{k \to +\infty} \chi_k=-\infty. 	
	\end{equation}
	\begin{remark} \label{REM5.5}
		For $k=1$, and $\tau_+=\tau_-=\tau/2$, and $ \delta_+=\delta_-=\delta \in (0,1)$, then by using the formula \eqref{5.5} we obtain 
		$$
		\begin{array}{ll}
			\chi_1&= \tau_+  \left(1-\dfrac{1}{\left(1+\delta_+\right)}\right)+\tau_-  \left(1-\dfrac{1}{\left(1-\delta_-\right)}\right) \vspace{0.2cm}\\
			& =\tau/2 \left[\dfrac{\delta}{1+\delta} -  \dfrac{\delta}{1-\delta} \right] \vspace{0.2cm}\\
			&=\tau/2 \left[\dfrac{\delta (1- \delta)- \delta (1+\delta)}{1-\delta^2} \right] 
		\end{array}
		$$
		and 
		$$
		\chi_1=-\tau \dfrac{\delta^2}{1-\delta^2}.
		$$
		Next, by using formula  \eqref{5.4}, we obtain 
		\begin{equation}  \label{5.4}
			\frac{d}{dt}	E_1(u(t)) = E_{0}(u_0)+ \tau \dfrac{\delta^2}{1-\delta^2} \, E_{1}(u(t)), \forall k \geq 1. 
		\end{equation}
		The means value of the distribution is 
		$$
		M(u(t))=\dfrac{E_1(u(t))}{E_0(u(t))}
		$$
		and since $t \to E_0(u(t))$ is a constant function we obtain 
		consequently, we obtain 
		$$
		M(u(t))' = 1+\tau \dfrac{\delta^2}{1-\delta^2} \,  M(u(t)). 
		$$
		We conclude that, 
		$$
		\lim_{t \to \infty} M(u(t))=+\infty, 
		$$
		and	 the smaller $\delta$ is, the closer  the mean value $M(u(t))$ growth like the time $t$. 
	\end{remark}
	\begin{lemma}  Let Assumption \ref{ASS5.1} be satisfied.  Define 
		$$
		x_{\max}=\frac{\ln \left( \tau_+\ln q_+\right) -\ln \left(-\tau_- \ln q_{-} \right)  }{\ln g_+ -\ln g_-}.  
		$$
		Consider the function $\chi : [0, + \infty) \to \R$  defined  by 
		$$
		\chi(x)= \tau_+  \left( 1-g_+^{-x}  \right)+\tau_-  \left(1-g_-^{-x}\right).
		$$
		There we have the following alternative 
		\begin{itemize}
			\item [{\rm (i)}] If $\tau_+ \ln g_+ +	\tau_- \ln g_->0$,  then the map $\chi$ is first increases from $\chi(0)=0$ to $\chi(x_{\max} )>0$ on $[0, x_{\max} ]$, and then decreases from $\chi(x_{\max} )>0$ to $-\infty$ on $[x_{\max}, \infty )$. 
			\item [{\rm (ii)}] If $\tau_+ \ln g_+ +	\tau_- \ln g_-\leq 0$, then  the map $\chi$ decreases from $0$ to $-\infty$ on $[0, \infty )$. 
		\end{itemize}
		
	\end{lemma}
\end{theorem}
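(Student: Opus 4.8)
The plan is to dispatch the four assertions of the theorem in turn, leaning on the abstract machinery already assembled. For existence and uniqueness of the mild solution I would invoke Theorem~\ref{TH5.3}: the generator is $A$ perturbed by the bounded operator $-(\tau_-+\tau_+)I + \tau_- B_{g_-} + \tau_+ B_{g_+}$, so the bounded perturbation theorem produces a unique $C_0$-semigroup and the variation-of-constants identity characterises the unique mild solution. To upgrade this to non-negativity I would observe that $e^{-(\tau_-+\tau_+)t}T_A(t)$ is a positive semigroup (the shift $T_A(t)$ in \eqref{3.7} maps $L^1_+$ into itself) and that $\tau_- B_{g_-}+\tau_+ B_{g_+}$ is a positive bounded operator, since $B_g\phi(x)=g\,\phi(gx)\ge 0$ whenever $\phi\ge 0$. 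Consequently every term of the Dyson--Phillips (equivalently, Picard) series built from the fixed-point equation in Theorem~\ref{TH5.3} is non-negative, so $u_0\in L^1_+$ forces $u(t)\in L^1_+$ for all $t\ge 0$.

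For the moment identities I would first secure finiteness of all moments: for compactly supported $u_0$ the argument of Lemma~\ref{LE3.1}, applied to the two nonlocal terms at once, shows $\mathrm{Support}(u(t,\cdot))\subset[0,b^\star+t]$, so every $E_k(u(t))$ is finite and differentiation under the integral sign is legitimate. Multiplying \eqref{5.1} by $\sigma^k$ and integrating over $(0,\infty)$ then gives, exactly as in the proof of Theorem~\ref{TH3.6}, a transport contribution and two nonlocal contributions. Integrating the transport term $-\int_0^\infty \sigma^k\partial_\sigma u\,\d\sigma$ by parts yields $k\,E_{k-1}(u(t))$, the boundary terms vanishing because $u(t,0)=0$ and $u(t,\cdot)$ has compact support. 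In each nonlocal term the substitution $\eta=g_\pm\sigma$ turns $\tau_\pm g_\pm\int_0^\infty \sigma^k u(t,g_\pm\sigma)\,\d\sigma$ into $\tau_\pm g_\pm^{-k}E_k(u(t))$, so collecting the coefficients of $E_k$ produces exactly $-\chi_k E_k(u(t))$ with $\chi_k=\tau_+(1-g_+^{-k})+\tau_-(1-g_-^{-k})$, which is \eqref{5.5} after recalling $g_\pm=1\pm\delta_\pm$. This establishes \eqref{5.4}.

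The two remaining claims fall out of the same computation at its endpoints in $k$. Taking $k=0$, the transport term integrates to $-u(t,0)=0$ while the rate coefficients cancel, since $-(\tau_-+\tau_+)+\tau_-+\tau_+=0$; hence $\tfrac{d}{dt}E_0(u(t))=0$, which integrates to the conservation law \eqref{5.3}. For the limit \eqref{5.6} I would use Assumption~\ref{ASS5.1}: since $g_+>1$ we have $g_+^{-k}\to 0$, so $\tau_+(1-g_+^{-k})\to\tau_+$, whereas $0<g_-<1$ gives $g_-^{-k}\to+\infty$, so $\tau_-(1-g_-^{-k})\to-\infty$; adding these yields $\chi_k\to-\infty$. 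It is worth flagging that this divergence --- driven by the premature-aging term and absent in the rejuvenation-only model of Theorem~\ref{TH3.6} --- is the qualitatively new feature here.

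The main obstacle I anticipate is purely technical: the formal moment calculation is only rigorous once the relevant moments are known to be finite, and for general $u_0\in L^1_+$ without compact support the higher $E_k$ may diverge. The cleanest remedy is to prove \eqref{5.3}--\eqref{5.5} first for smooth, compactly supported $u_0$ --- where Lemma~\ref{LE3.1}, classical regularity along characteristics, and dominated convergence make every step legitimate --- and then to extend the identities to all admissible initial data with finite moments by a density and approximation argument, using the continuous dependence guaranteed by Theorem~\ref{TH5.3}. A secondary point to check carefully is the vanishing of the boundary term at infinity in the integration by parts, which again is immediate under compact support but must be handled through the decay of $u(t,\cdot)$ in the general case.
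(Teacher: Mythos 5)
Your proposal is correct and follows essentially the same route as the paper: the paper proves Theorem~\ref{TH5.4} simply by invoking the argument of Theorem~\ref{TH3.6} --- multiply by $\sigma^k$, integrate the transport term by parts, and change variables $\eta = g_\pm \sigma$ in the nonlocal terms --- which is exactly your central calculation, extended to the two jump operators, and your coefficient bookkeeping reproduces \eqref{5.5} and \eqref{5.6} correctly. Your additional care (positivity via the Dyson--Phillips series, moment finiteness via compact support as in Lemma~\ref{LE3.1}, and the density argument for general initial data) goes beyond the paper's own proof, which remains at the level of the formal computation.
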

\begin{proposition} \label{PROP5.4}
	Assume that there exists $k_0 \in \N$,  such that 
	\begin{equation*}
		\chi_k>0, \forall k=1 , \ldots, k_0, 	
	\end{equation*}
	and 
	\begin{equation*}
		\chi_k<0, \forall k > k_0.	
	\end{equation*}
	If we denote the equilibrium solution of \eqref{5.4}  by 
	\begin{equation}  \label{5.14}
		\overline{E}_k=\left(  \prod_{j=1}^{k}  \dfrac{j}{\chi_j}  \right)\int_{0}^{\infty} u_0(\sigma) \d \sigma,  \forall k=1 , \ldots, k_0 ,
	\end{equation}
	consequently  from \eqref{5.4},  we have the following convergence result 
	\begin{equation}   \label{5.15}
		\lim_{t \to +\infty} E_k(u(t)) =\overline{E}_k,  \forall k=1 , \ldots, k_0, 
	\end{equation}
	and
	\begin{equation}  \label{5.16}
		\lim_{t \to +\infty} E_k(u(t)) =+\infty,  \forall k >  k_0.
	\end{equation}	
\end{proposition}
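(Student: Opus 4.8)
The plan is to exploit the lower-triangular structure of the moment system stated in Theorem~\ref{TH5.4}: the evolution of $E_k(u(t))$ involves only itself and the next lower moment $E_{k-1}(u(t))$, so the moments can be resolved one at a time by induction on $k$, starting from the conserved zeroth moment $E_0(u(t)) \equiv m := \int_0^\infty u_0(\sigma)\,\d \sigma$. Throughout I use two facts supplied by Theorem~\ref{TH5.4}: the mild solution is non-negative, so that each $E_k(u(t)) \ge 0$; and the moments are finite at every finite time, so that the moment identity is valid on all of $[0,\infty)$. I also take $u_0 \not\equiv 0$, so that $m>0$, the case $u_0 \equiv 0$ being trivial.

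For the convergence regime $1 \le k \le k_0$, I would first record the elementary fact that a scalar linear equation $y'(t) = -c\,y(t) + f(t)$ with $c>0$ and $f(t) \to f_\infty$ satisfies $y(t) \to f_\infty/c$; this follows from $y(t) = y(0)e^{-ct} + \int_0^t e^{-c(t-s)}f(s)\,\d s$ by splitting the integral at a time beyond which $|f-f_\infty|<\varepsilon$. Applying this with $c=\chi_k>0$ and $f(t)=k\,E_{k-1}(u(t))$, the induction hypothesis $E_{k-1}(u(t)) \to \overline{E}_{k-1}$ gives $E_k(u(t)) \to k\,\overline{E}_{k-1}/\chi_k$. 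Unwinding the recursion $\overline{E}_k = (k/\chi_k)\,\overline{E}_{k-1}$ from the base $\overline{E}_0 = m$ reproduces the product formula \eqref{5.14} and establishes the convergence \eqref{5.15}.

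For the blow-up regime $k>k_0$, I would abandon the explicit solution in favour of a differential inequality, since there the homogeneous rate $-\chi_k$ is destabilizing. Because $\chi_k<0$, the term $-\chi_k E_k(u(t)) = |\chi_k|\,E_k(u(t))$ is non-negative, so the moment equation yields $\tfrac{d}{dt}E_k(u(t)) \ge k\,E_{k-1}(u(t))$ and hence $E_k(u(t)) \ge E_k(u_0) + k\int_0^t E_{k-1}(u(s))\,\d s$. For the base case $k=k_0+1$ the lower moment converges to $\overline{E}_{k_0}>0$ (strictly positive since $m>0$ and every $\chi_j>0$ for $j\le k_0$), so its running integral diverges and $E_{k_0+1}(u(t)) \to +\infty$. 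A second induction then propagates the blow-up: once $E_{k-1}(u(t)) \to +\infty$, its integral again diverges and the same inequality forces $E_k(u(t)) \to +\infty$, which is \eqref{5.16}.

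The genuinely delicate point, and the one I would treat with care, is the transition at $k=k_0+1$, where the forcing $k\,E_{k-1}$ stays bounded (indeed converges) and yet the moment still diverges. This is exactly where the convergence lemma fails, since $c=\chi_{k_0+1}<0$, and where one must instead combine non-negativity of the solution with the sign change of $\chi_k$ through the differential inequality above; the strict positivity of $\overline{E}_{k_0}$ is essential here. The remaining ingredients, namely the $\varepsilon$-splitting estimate and the two straightforward inductions, are routine.
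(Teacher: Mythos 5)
Your proof is correct, and it follows exactly the route the paper intends: Proposition \ref{PROP5.4} is stated as an immediate consequence of the triangular moment system \eqref{5.4} with no further argument given, so your induction on $k$ --- the variation-of-constants convergence lemma for the scalar equation $y'=-\chi_k y+f$ with $\chi_k>0$ when $k\le k_0$, and the differential inequality $\frac{d}{dt}E_k(u(t))\ge k\,E_{k-1}(u(t))$ obtained from non-negativity when $k>k_0$ --- supplies precisely the details the paper leaves implicit. The only caveats, which apply equally to the paper's own statement and which you flag appropriately, are that $u_0\not\equiv 0$ is needed (otherwise all moments vanish identically and \eqref{5.16} fails) and that the moments of $u_0$ must be finite for the identity \eqref{5.4} to hold on all of $[0,\infty)$ (guaranteed, e.g., for compactly supported $u_0$ by Lemma \ref{LE3.1}).
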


\medskip 
\noindent \textbf{Equilibrium solution:}  An equilibrium solution satisfies some delay equation with both advance and retarded delay. That is,  
\begin{equation}  \label{5.10}
	\left\lbrace
	\begin{array}{rl}
		\overline{u}'(b) &= \ - \left(\tau_-+\tau_+ \right) \overline{u}(b) \vspace{0.2cm}\\
		&\quad + \tau_- \, g_-  \,  \overline{u}\left(g_- \, b\right) \vspace{0.2cm}\\
		&\quad +\tau_+ \,  g_+ \,   \overline{u} \left(g_+ \, b\right), \vspace{0.2cm}\\
		\overline{u}(0)&=0,  
	\end{array}
	\right.
\end{equation}
and the difficulty of solving such an equation comes from the following 
$$
g_+ \, b>b >g_- \, b, \, \forall b >0.
$$ 
It follows that, even the existence of equilibrium solution is not a classical problem to investigate. 

\medskip 
\noindent \textbf{Non existence result for exponentially decreasing equilibrium solution of \eqref{5.1}:}  Assume that $b \mapsto \overline{u}(b) $ is a non-negative continuously differentiable map satisfying system \eqref{5.10}.  

\begin{assumption} \label{ASS5.7}
	Assume that the map $b \mapsto \overline{u}(b) $ is non-negative, and non null, and  continuously differentiable map, and  satisfies the system \eqref{5.10}. Assume in addition that $ \overline{u}(b) $ is exponentially decreasing. That is, there exist two constants  $M>0$, and $\gamma>0$, such that 
	$$
	\overline{u}(b) \leq M e^{-\gamma b}, \forall b \geq 0. 
	$$ 
\end{assumption}
By using  the first equation of \eqref{5.10}, we deduce that 
$$
\vert \overline{u}'(b) \vert \leq \widetilde{M} e^{-\gamma \, \min(1,g_+, g_-) \, b}, \forall b \geq 0,
$$
for some suitable $\widetilde{M} >0$.

So, under Assumption \ref{ASS5.7},  all the moments of  $\overline{u}(b) $ and $\overline{u}'(b) $ are well defined. Moreover, by using the first equation of  \eqref{5.10}, we obtain for each $k\geq 1 $ 
\begin{equation*}
	\begin{array}{rl}
		\displaystyle		\int_{0}^{\infty} \sigma^k \overline{u}'(\sigma) \d \sigma &=  - \left(\tau_-+\tau_+ \right) 	\int_{0}^{\infty}  \sigma^k \overline{u}(\sigma) \d \sigma + \tau_+  	\int_{0}^{\infty} \sigma^k \overline{u}\left(t, g_+  \sigma \right)   g_+   \d \sigma\\
		&\quad + \tau_-  	\int_{0}^{\infty} \sigma^k \overline{u}\left(t, g_-  \sigma \right)   g_-   \d \sigma,
	\end{array}		
\end{equation*}
and since $\overline{u}(0)=0 $, we obtain by integrating by parts 
\begin{equation*}
	\begin{array}{rl}
		\displaystyle	-k	\int_{0}^{\infty} \sigma^{k-1} \overline{u}(\sigma) \d \sigma &=  - \left(\tau_-+\tau_+ \right) 	\int_{0}^{\infty}  \sigma^k \overline{u}(\sigma) \d \sigma + \tau_+  	\int_{0}^{\infty} \sigma^k \overline{u}\left(t, g_+  \sigma \right)   g_+   \d \sigma\\
		&\quad + \tau_-  	\int_{0}^{\infty} \sigma^k \overline{u}\left(t, g_-  \sigma \right)   g_-   \d \sigma,
	\end{array}		
\end{equation*}
and we obtain 
\begin{equation} \label{5.11}
	k	E_{k-1}(\overline{u})= \chi_k \, E_{k}(\overline{u}), \forall k \geq 1, \forall k \geq 1, 
\end{equation}
where $\chi_k$ is define by \eqref{5.5}. 

Under Assumption \ref{ASS5.7}, we must have 
$$
E_{k-1}(\overline{u})>0, \forall k \geq 0,
$$
and by using \eqref{5.6}, we deduce that \eqref{5.11} can not be satisfied for all $k \geq 1$ large enough. Therefore we obtain the following proposition. 

\begin{proposition} \label{PROP5.8}
	The system \eqref{5.10} has no exponential decreasing solution. That is a solution of \eqref{5.10} satisfying the Assumption \ref{ASS5.7}.
\end{proposition}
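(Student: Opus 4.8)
The plan is to argue by contradiction, exploiting the sign information on the sequence $(\chi_k)_{k \ge 1}$. Suppose $\overline{u}$ is a solution of \eqref{5.10} satisfying Assumption \ref{ASS5.7}. The first task is to guarantee that \emph{every} moment $E_k(\overline{u})$ is finite, so that the moment identity may legitimately be written for all $k$. This is exactly where the exponential decay hypothesis enters: the bound $\overline{u}(b) \le M e^{-\gamma b}$ gives finiteness of $E_k(\overline{u})$ for every $k$, and substituting the decay bounds for $\overline{u}(b)$, $\overline{u}(g_- b)$ and $\overline{u}(g_+ b)$ into the right-hand side of \eqref{5.10} yields $\vert \overline{u}'(b) \vert \le \widetilde{M} e^{-\gamma g_- b}$, the slowest rate being governed by $g_- < 1$, so that all moments of $\overline{u}'$ are finite as well.

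Next I would reproduce the moment computation already displayed before the statement: multiply the first equation of \eqref{5.10} by $\sigma^k$, integrate on $(0,\infty)$, integrate the left-hand side by parts (the boundary terms vanish because $\overline{u}(0) = 0$ and $\sigma^k \overline{u}(\sigma) \to 0$ at infinity by exponential decay), and perform the change of variables $\eta = g_\pm \sigma$ in the two nonlocal integrals. This produces the identity \eqref{5.11}, namely $k \, E_{k-1}(\overline{u}) = \chi_k \, E_k(\overline{u})$ for every $k \ge 1$, with $\chi_k$ as in \eqref{5.5}.

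The decisive step is the sign analysis. Because $\overline{u}$ is non-negative, continuous and non-null, it is strictly positive on some subinterval of $(0,\infty)$, whence $E_k(\overline{u}) > 0$ for every $k \ge 0$. Solving \eqref{5.11} for $\chi_k$ then gives
$$
\chi_k = \frac{k \, E_{k-1}(\overline{u})}{E_k(\overline{u})} > 0, \quad \forall k \ge 1,
$$
so every $\chi_k$ must be strictly positive. This contradicts \eqref{5.6}, which asserts $\lim_{k \to \infty} \chi_k = -\infty$ and in particular forces $\chi_k < 0$ for all $k$ large enough. The contradiction shows that no solution satisfying Assumption \ref{ASS5.7} can exist, which is precisely the claim.

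I would expect the only genuine obstacle to lie in the analytic bookkeeping that makes \eqref{5.11} rigorous \emph{for every} $k$ rather than merely formal: namely, the simultaneous finiteness of all moments of $\overline{u}$ and $\overline{u}'$, and the vanishing of the boundary terms in the integration by parts. Once \eqref{5.11} is secured for all $k$, the positivity-versus-limit contradiction is immediate, since the hypothesis on $\chi_k$ produced by \eqref{5.6} is incompatible with a quotient of two positive moments.
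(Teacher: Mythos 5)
Your proposal is correct and follows essentially the same route as the paper: establish finiteness of all moments of $\overline{u}$ and $\overline{u}'$ from the exponential decay bound (your observation that the slowest rate is $e^{-\gamma g_- b}$ agrees with the paper's $\min(1,g_+,g_-)=g_-$), derive the moment identity \eqref{5.11} by integration by parts and change of variables, and contradict the positivity of the moments with the fact from \eqref{5.6} that $\chi_k<0$ for all large $k$. Your phrasing of the contradiction, solving \eqref{5.11} for $\chi_k$ as a quotient of positive quantities, is just a slightly more explicit rendering of the paper's conclusion that \eqref{5.11} cannot hold for large $k$.
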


\section{Numerical simulations}
\label{Section6}
\subsection{Simulation of PDE \eqref{5.1}} 
By setting 
$$
\tau_+=\tau \, p, \text{ and }  \tau_-=\tau \, (1-p),
$$
where $\tau$ is the rate at which individual either rejuvenates of prematurely ages, and $p \in [0,1]$ is the probability of  rejuvenation and $(1-p) \in [0,1]$ is the probability of premature aging. The parameter $p$ can be interpreted as the probability of being cured in the event of illness or injury. The parameter $1-p$ at the opposite is the probability of getting injured or sick. 

By using these new parameters, we obtain  a probabilistic interpretation of the model \eqref{5.1}, and the model  \eqref{5.1} becomes 
\begin{equation}  \label{7.1}
	\left\lbrace
	\begin{array}{rl}
		\partial_t u(t,b)+\partial_b u(t,b) &= - \tau  \, u(t,b) \vspace{0.2cm}\\
		&\quad + \tau \, (1-p)\, g_-  \,  u\left(t, g_- \, b\right) \vspace{0.2cm}\\
		&\quad +\tau  \, p \,  g_+ \,  u\left(t, g_+ \, b\right) \vspace{0.2cm}\\
		u(t,0)&=0,\\
		u(0,b)&=u_0(b) \in L^1_+\left((0,\infty),\R \right),
	\end{array}
	\right.
\end{equation}
and
$$
g_+ >1 >g_- >0.
$$ 

In order to run a simulation of model \eqref{7.1}, we use stochastic simulations. We consider a population composed of a finite number $N=100 \, 000$  of individuals.  We start the simulation a time $t=0$ with  all individuals with the same age $a=20$ years. The time to the next event (rejuvenation or premature aging) follows an exponential law with parameters $1/\tau$.  The principles of the simulations are as follows: When an event occurs  we choose random one individuals; and we compute a random value between $0$ and $1$. If this value is  $p$ rejuvenation  occurs, and premature aging occurs otherwise.  At each time step the biological age increases by one time step.  

In Figure \ref{Fig3A},  we present some simulation of the model \eqref{7.1} whenever $p=0.5$,  $1/\tau=1$ years,  $ g_+=1+\delta_+=1.1$ and $g_-=1-\delta_-=0.9$.  In Figure \ref{Fig3A}, we can observe that starting from a single cohort of individuals with biological age $20$ at time $t=0$, the mean value of the distribution follows the chronological age (thanks to the fact that $p=1/2$, and $\delta_+=\delta_-=0.1$,  and the remark \ref{REM5.5}). But the density of the population deviates more and more with time. One also needs to interpret the biological age by saying that the larger the biological age is, the more people are likely to die. Therefore the more the population is dispersed around the means value, the more people (with larger ages) are likely to die earlier. People with a large biological age can be understood as people suffering from a lack of treatment for their illnesses or injuries.

\begin{figure}
	\begin{center}
		\textbf{(a)} \hspace{4cm} \textbf{(b)}\\
		\includegraphics[scale=0.13]{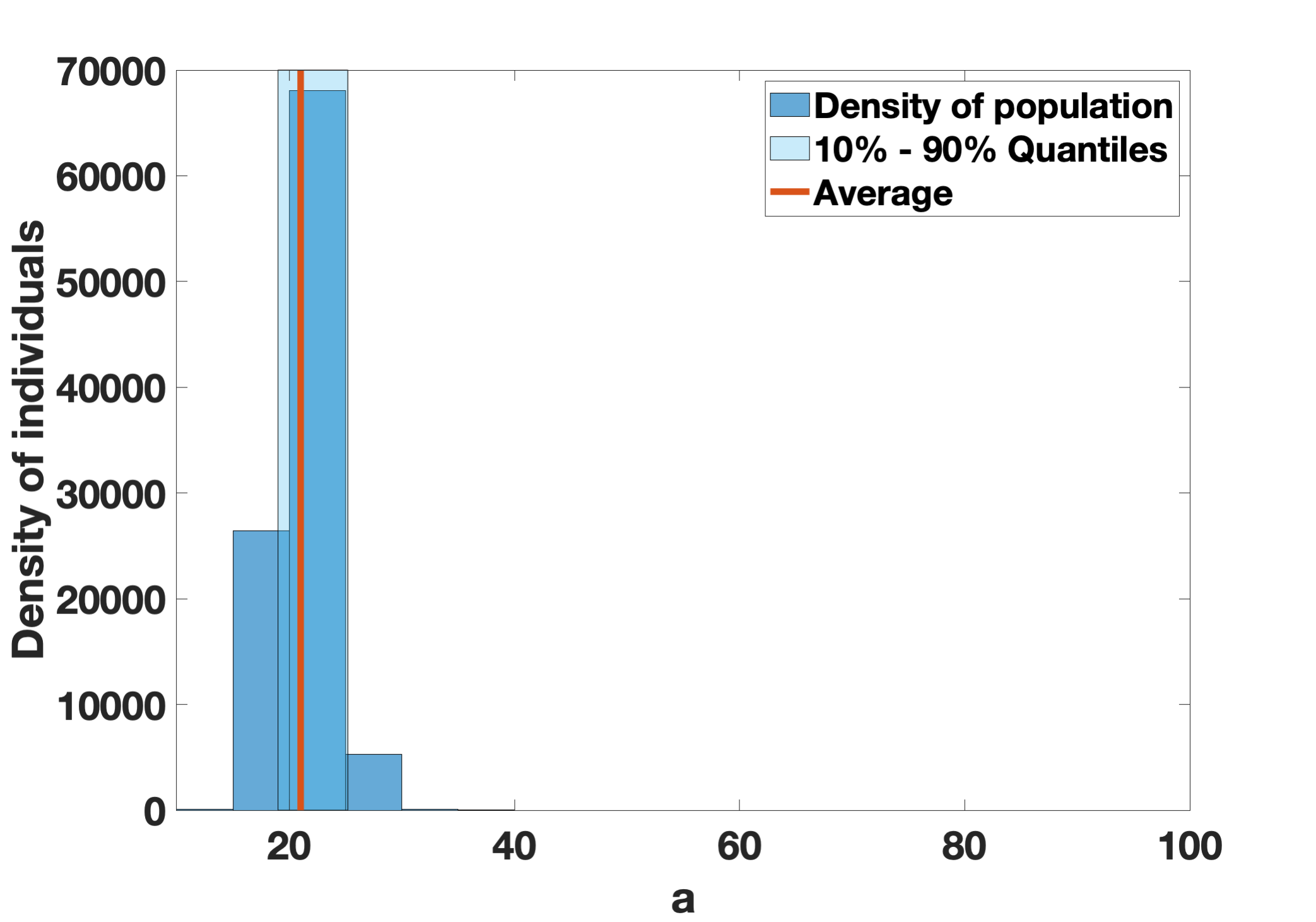}
		\includegraphics[scale=0.13]{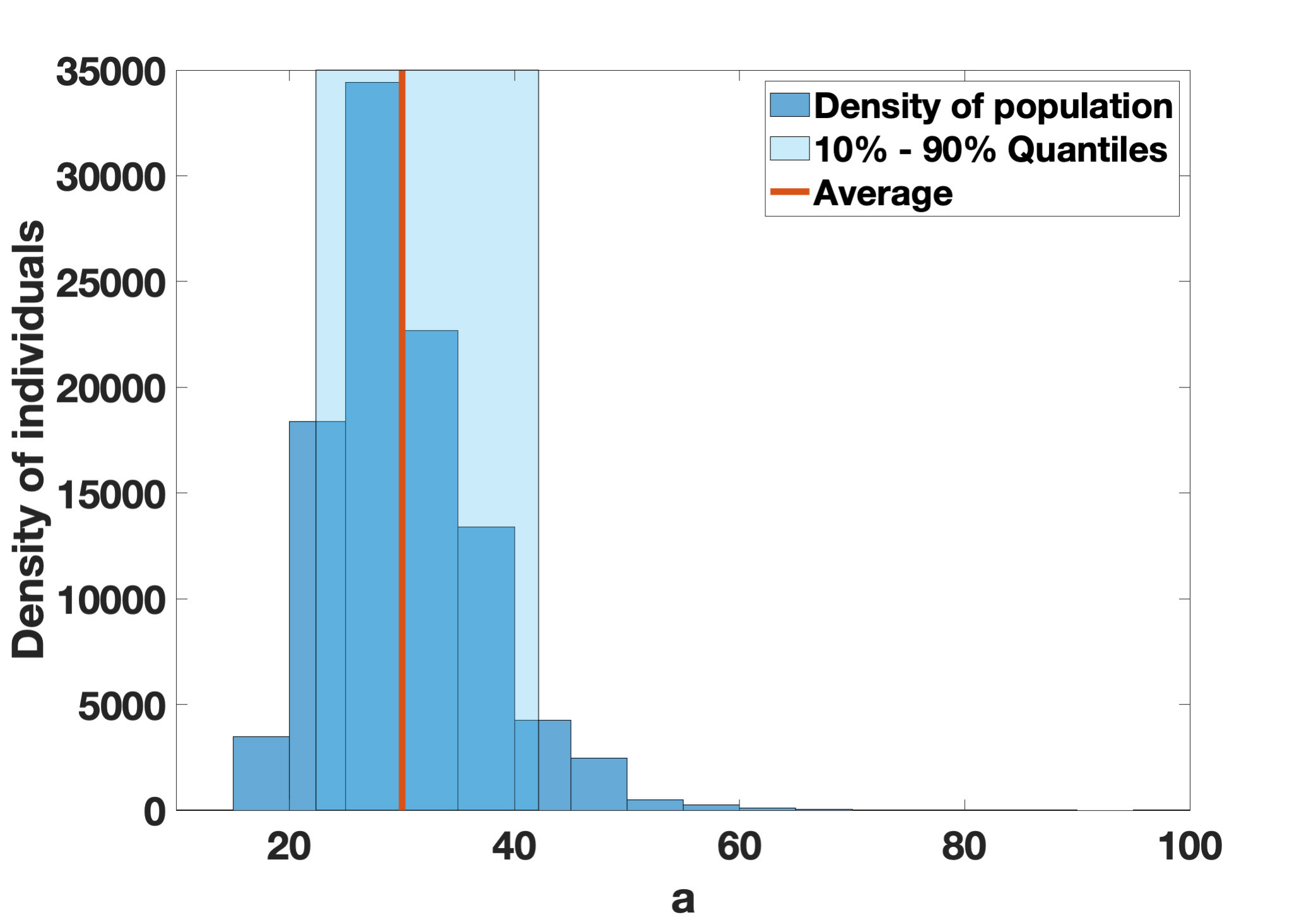}\\
		\textbf{(c)} \hspace{4cm} \textbf{(d)}\\
		\includegraphics[scale=0.13]{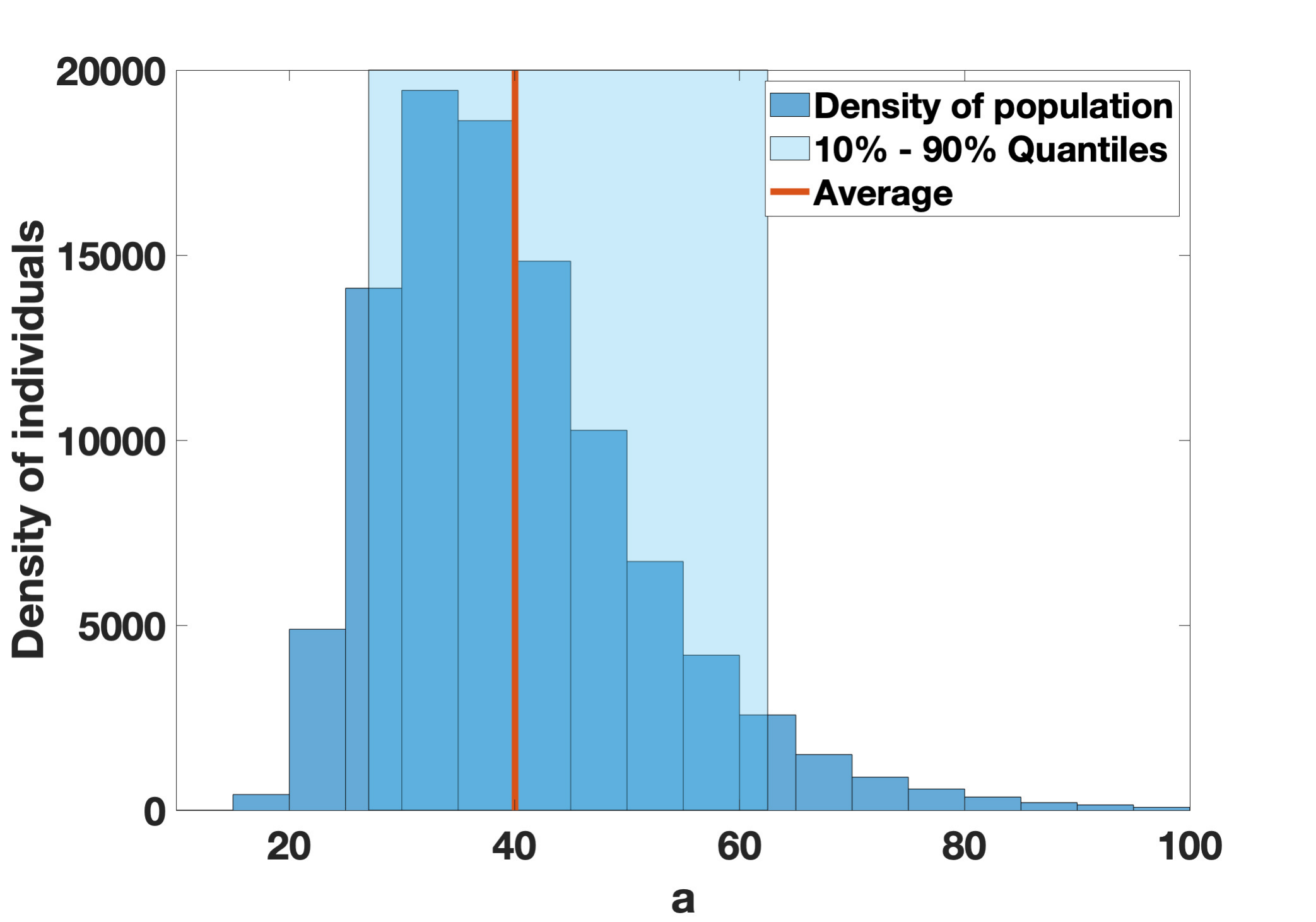}
		\includegraphics[scale=0.13]{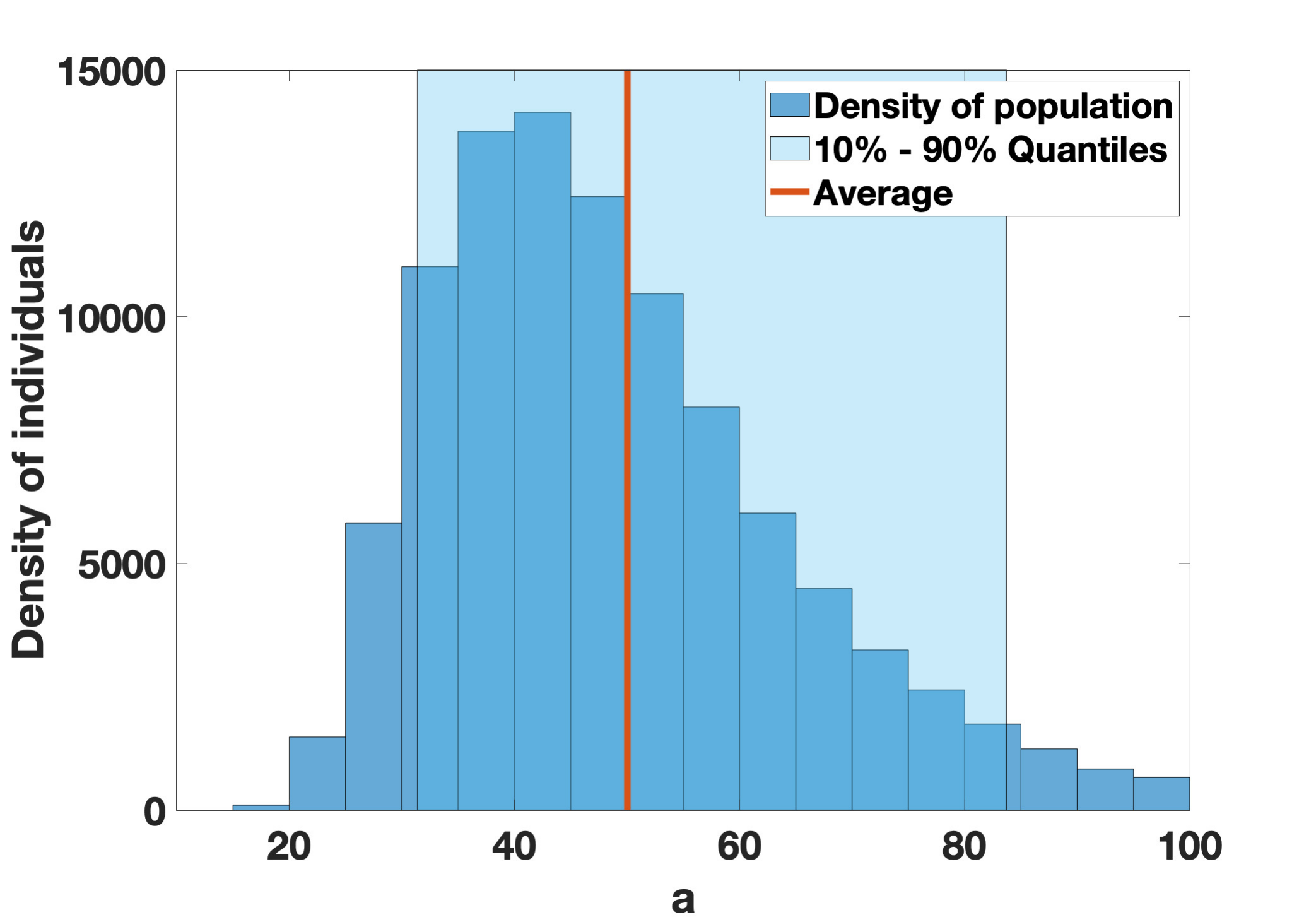}
	\end{center}
	\caption{\textit{In this figure, we plot some stochastic numerical simulations of the model \eqref{7.1} whenever $p=0.5$,  $1/\tau=1$ years,  $ g_+=1.1$ and $g_-=0.9$. We start the simulations with a cohort of $100 \, 000$ individuals all with biological age $20$ years old. The figures (a) (b) (c) (d) are respectively the distribution after $1$ year, $10$ years, $20$ years and $30$ years.    }}\label{Fig3A}
\end{figure}

In the Figures \ref{Fig3B} and \ref{Fig3C},  we investigate the influence of the parameter $p=0.25$ in Figure \ref{Fig3B},  and  $p=0.75$ in Figure \ref{Fig3C}. In Figures \ref{Fig3B} and \ref{Fig3C}, we can see that due to the dissymmetric value of $p$ the mean value no longer follows the chronological age. We can observe that the parameter $p$ plays very important in the aging process. 
\begin{figure}
	\begin{center}
		\textbf{(a)} \hspace{4cm} \textbf{(b)}\\
		\includegraphics[scale=0.13]{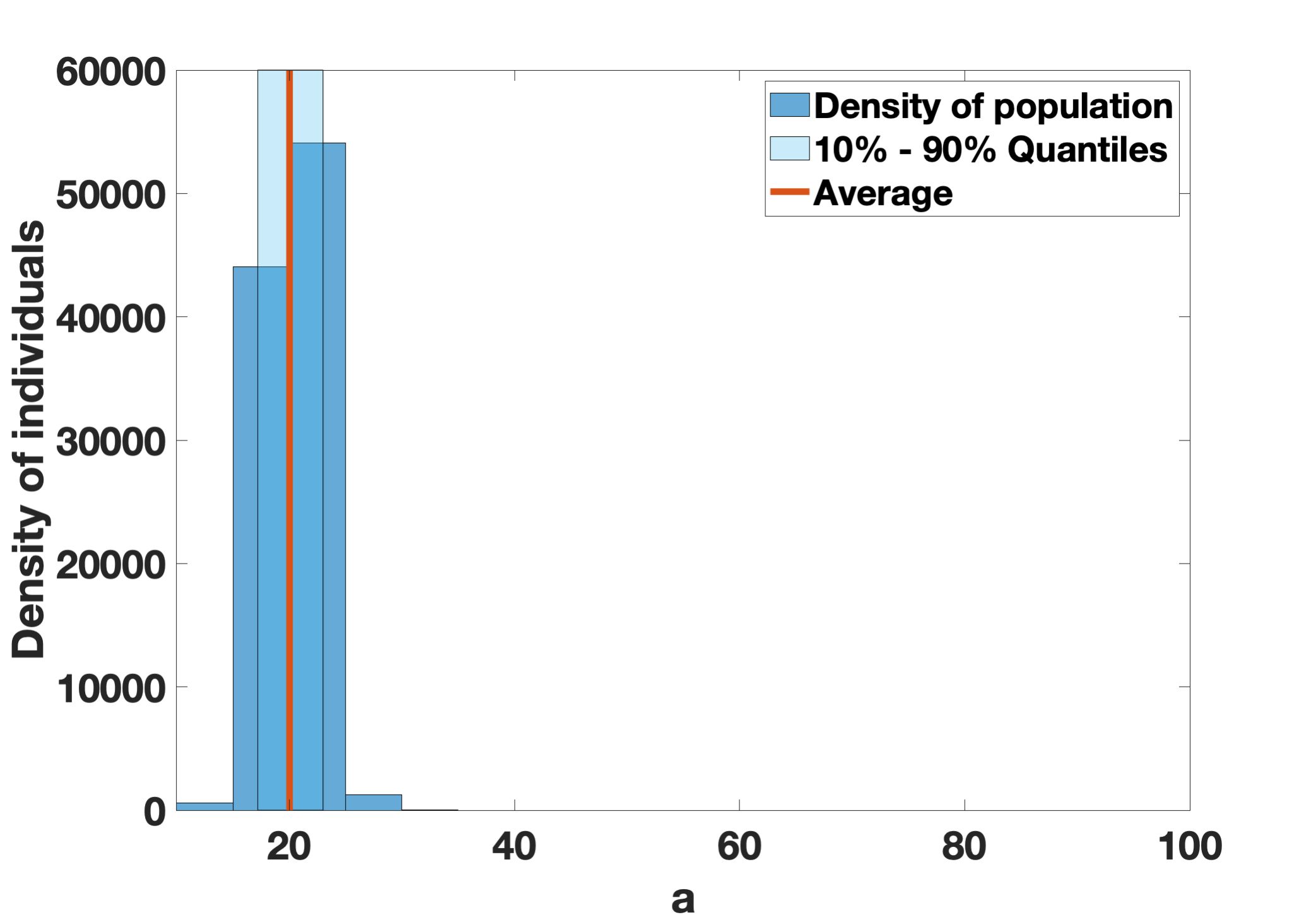}
		\includegraphics[scale=0.13]{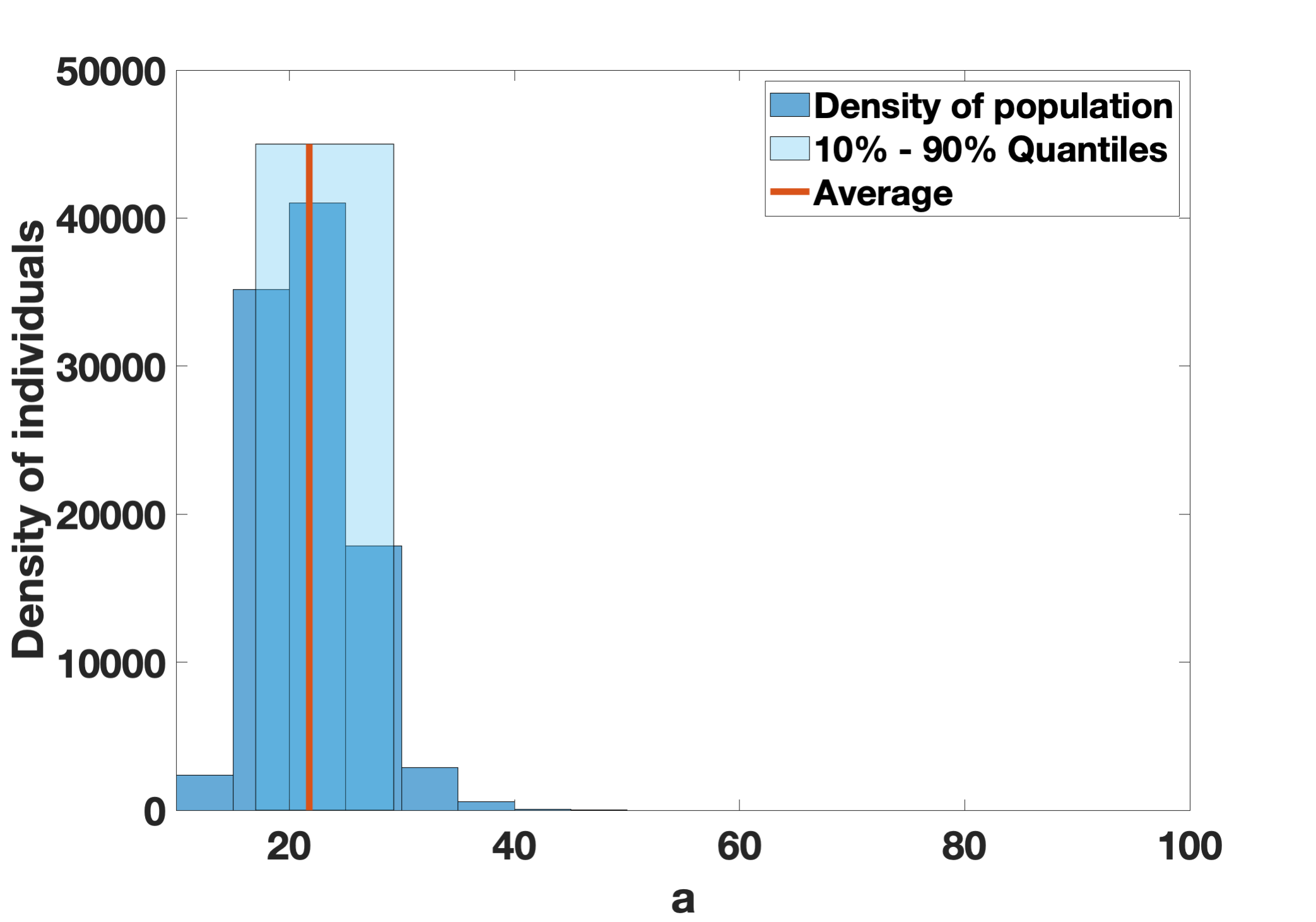}\\
		\textbf{(c)} \hspace{4cm} \textbf{(d)}\\
		\includegraphics[scale=0.13]{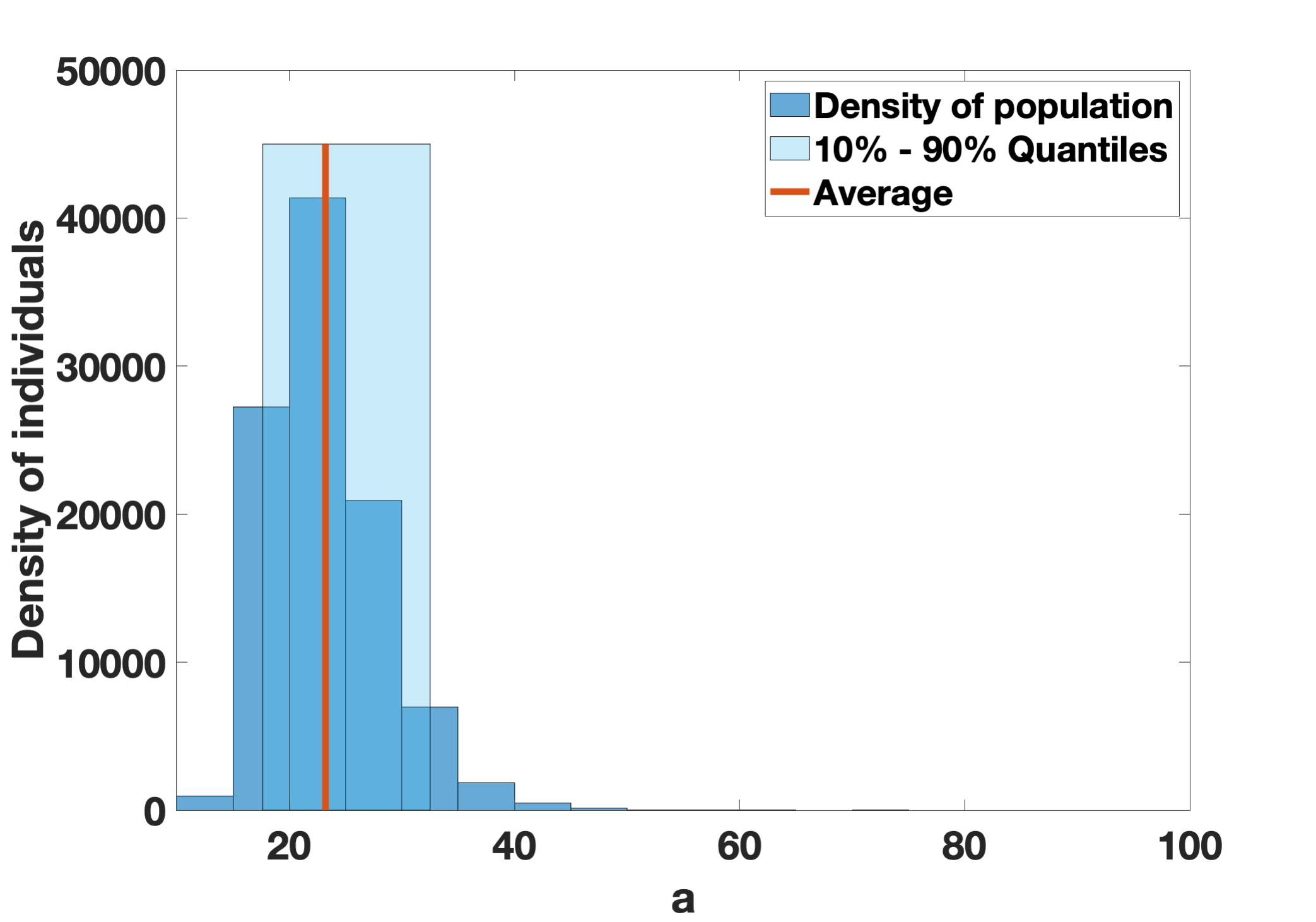}
		\includegraphics[scale=0.13]{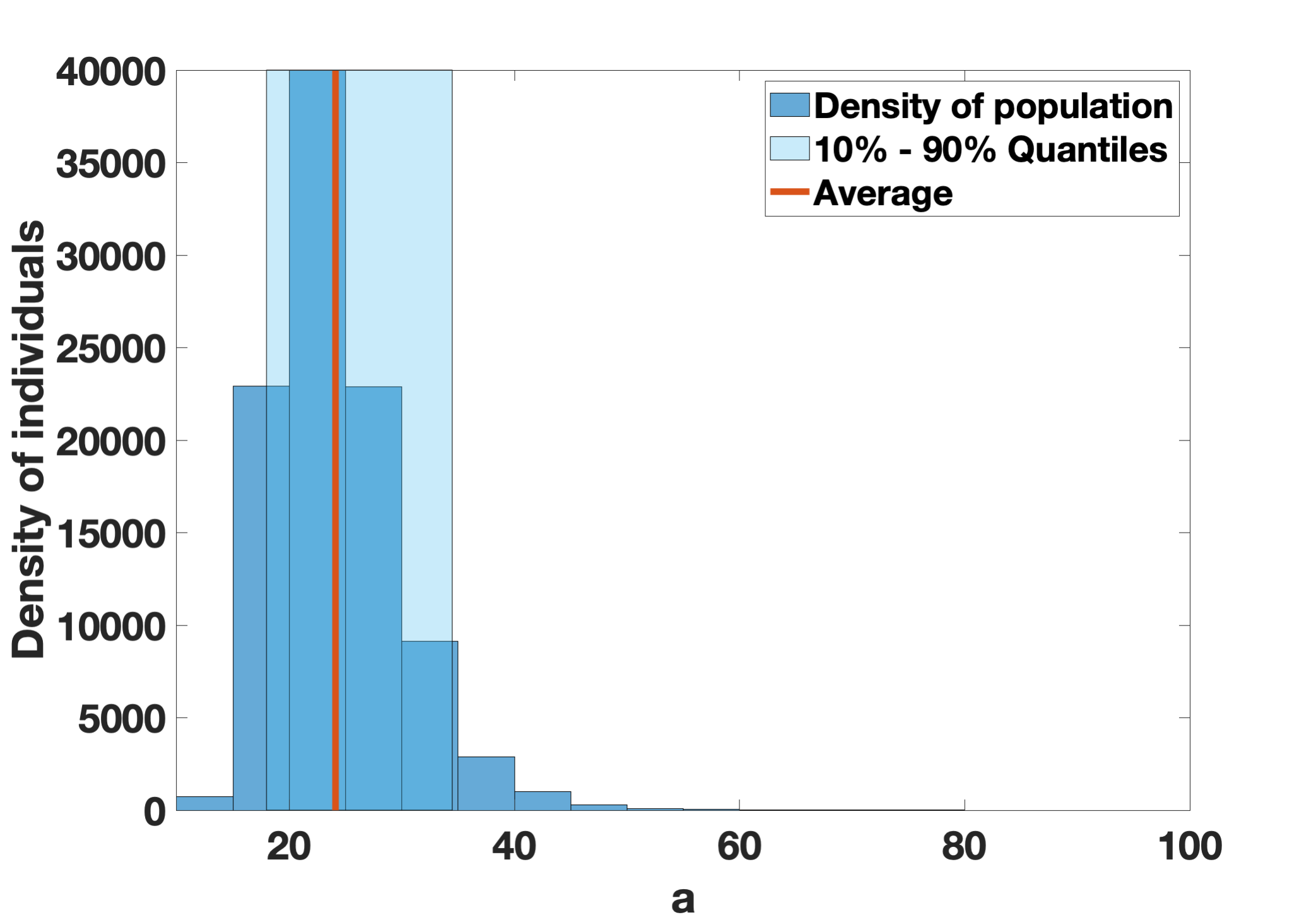}
	\end{center}
	\caption{\textit{In this figure, we plot some stochastic numerical simulations of the model \eqref{7.1} whenever $p=0.25$, the rest is the same as in Figure  \ref{Fig3A}.}}\label{Fig3B}
\end{figure}
\begin{figure}
	\begin{center}
		\textbf{(a)} \hspace{4cm} \textbf{(b)}\\
		\includegraphics[scale=0.13]{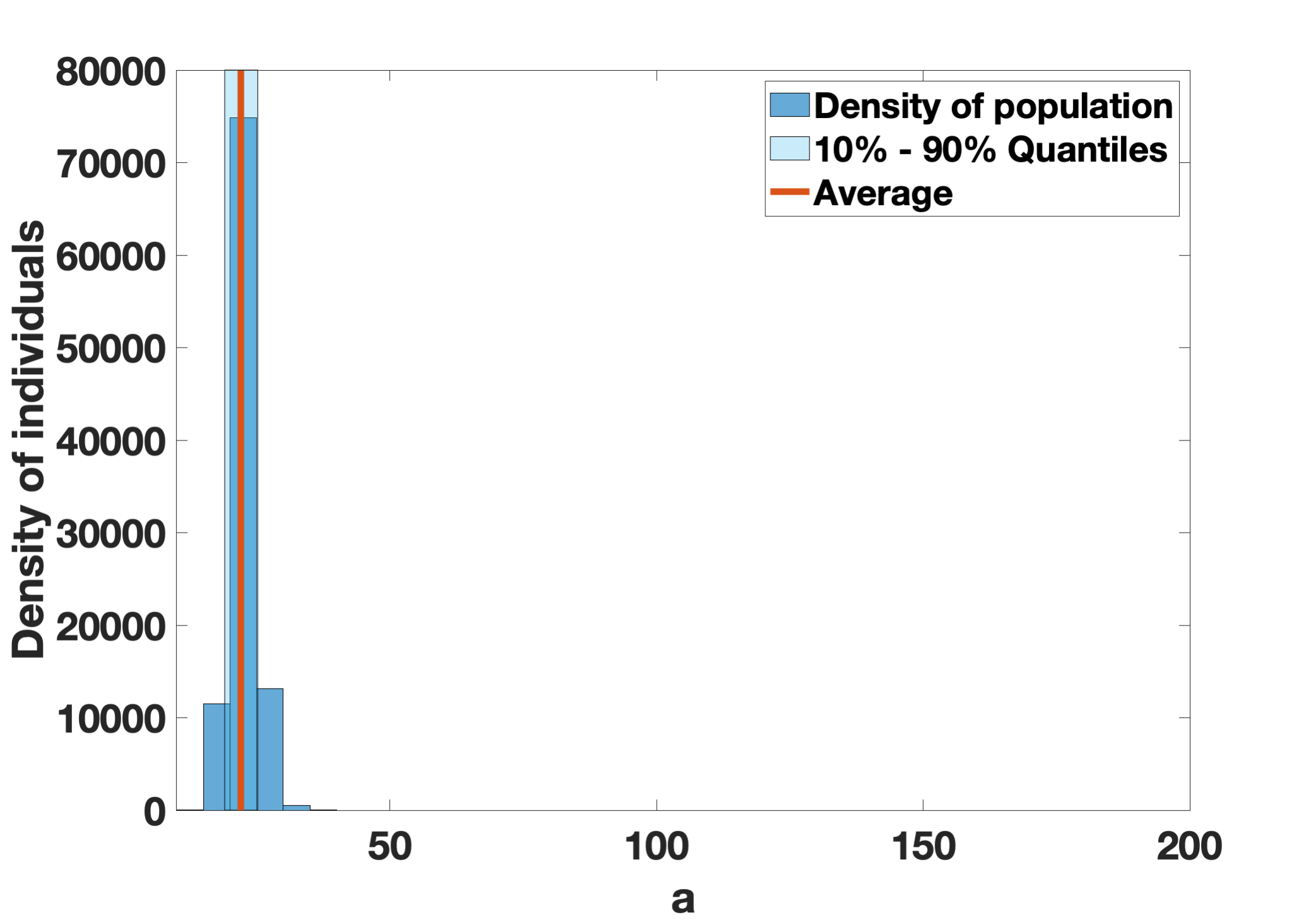}
		\includegraphics[scale=0.13]{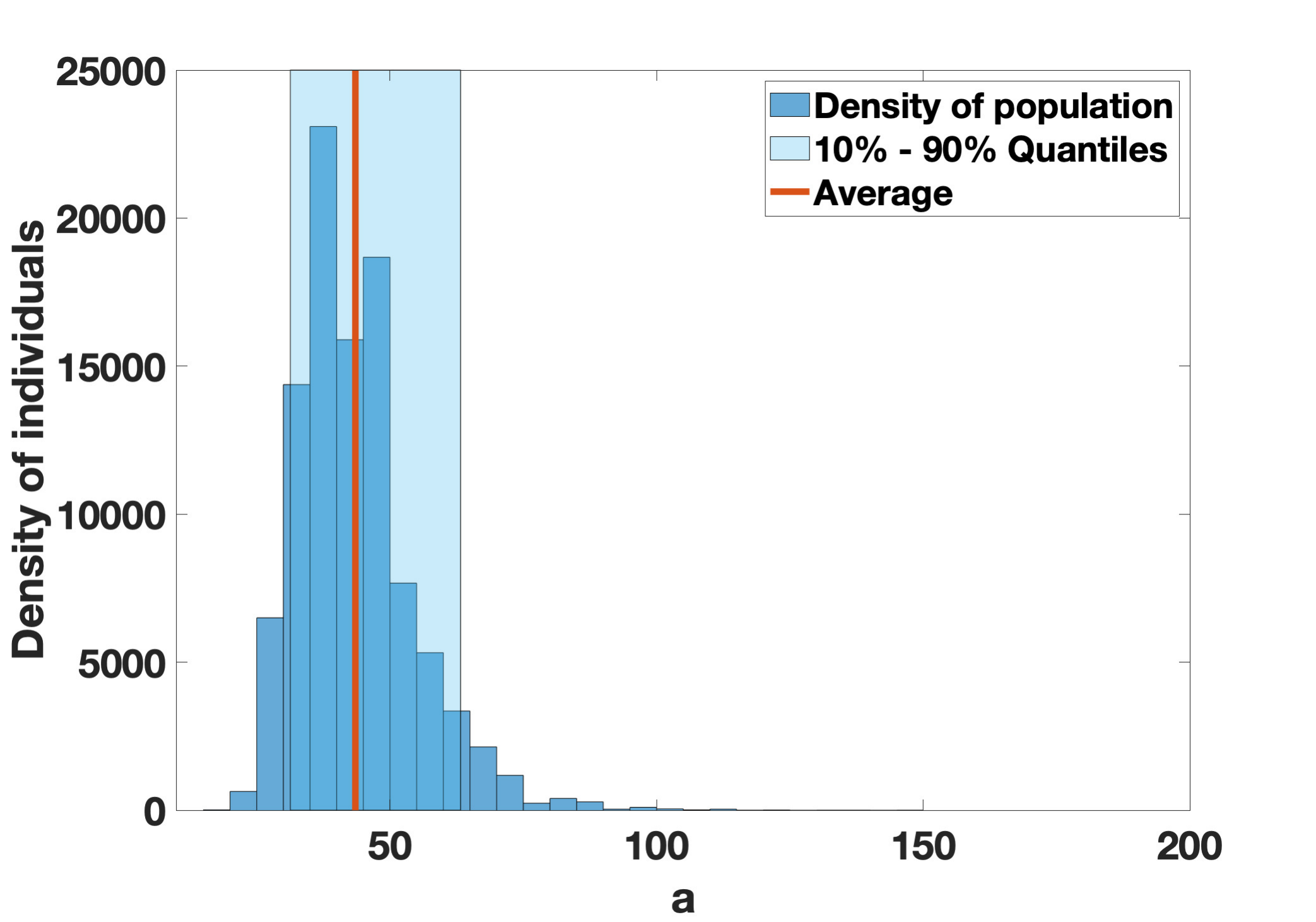}\\
		\textbf{(c)} \hspace{4cm} \textbf{(d)}\\
		\includegraphics[scale=0.13]{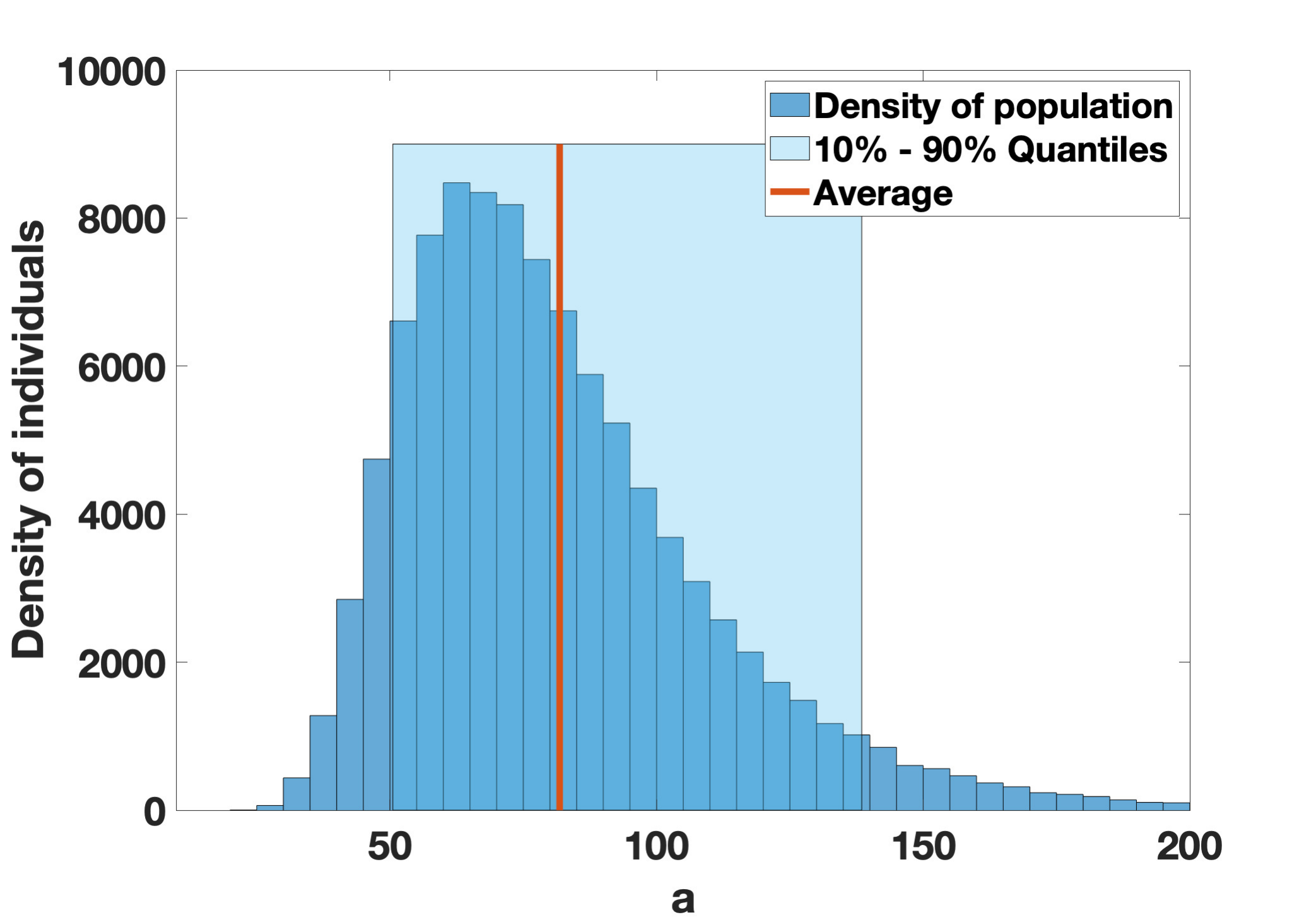}
		\includegraphics[scale=0.13]{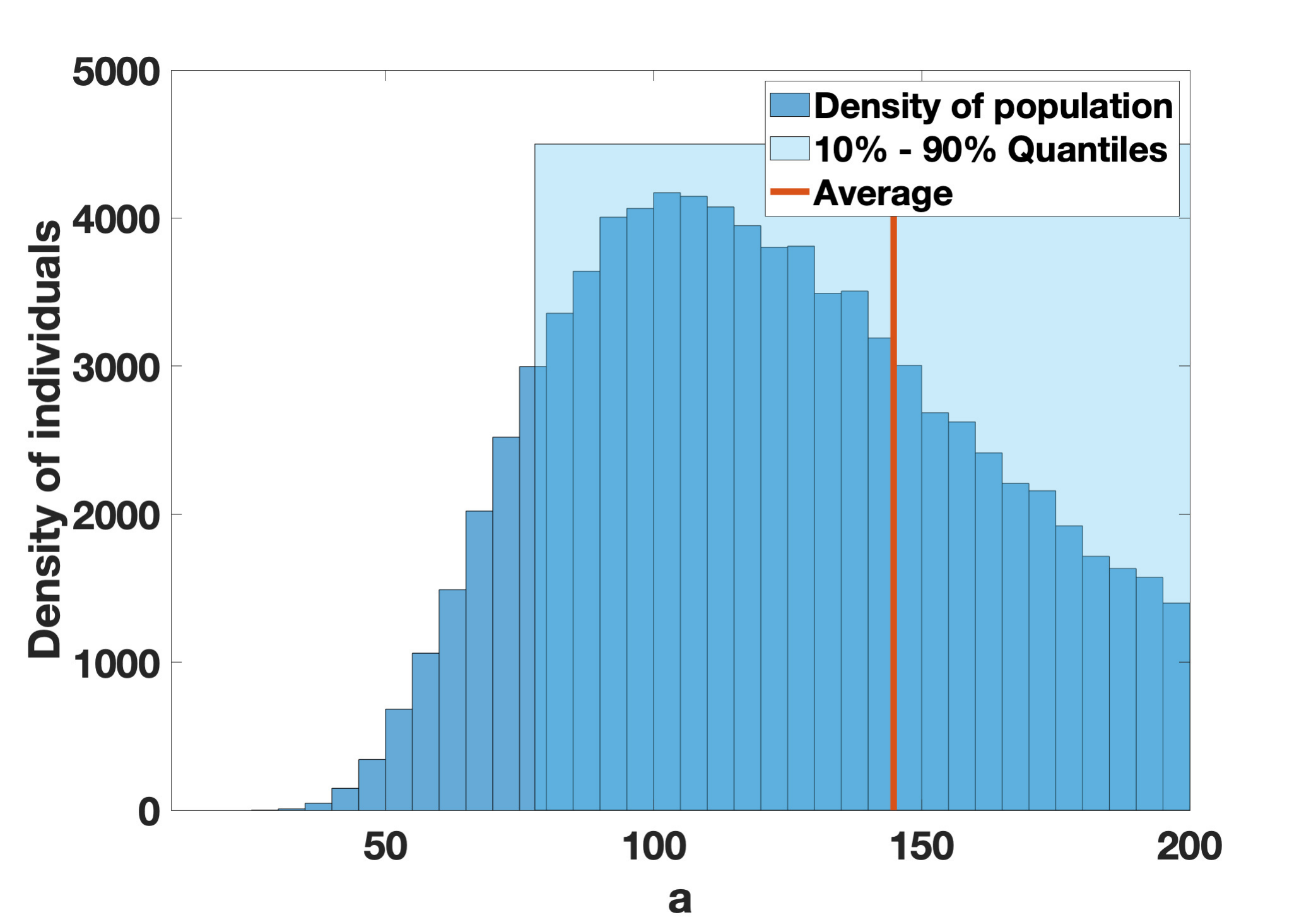}
	\end{center}
	\caption{\textit{In this figure, we plot some stochastic numerical simulations of the model \eqref{7.1} whenever $p=0.75$, the rest is the same as in Figure  \ref{Fig3A}.  }}\label{Fig3C}
\end{figure}

\subsection{Simulation of the moments equation \eqref{5.4}}
We choose 
$$
\tau_+=\tau_-=0.1, \delta_+=0.1, \delta_+=0.01, 
$$
and we obtain  
$$
g_+=1+\delta_+=1.1, \text{ and } g_-=1-\delta_-=0.99.  
$$
We use the initial distribution with compact support 
$$
u_0(b)= \dfrac{2}{5} \max \left( 0, b \left(1-b/10\right) \right).
$$
In Figure \ref{Fig3}, the initial values $k \mapsto E_k(u_0)$ increase with $k$ (see (b)). Moreover, the components $k \mapsto E_k(u(t))$ keep the same order for all $t>0$ (see (d)). Moreover by using (c) and the Theorem \ref{TH5.3}, we deduce that $k_0 \in [60,80]$, such that if $k$ is below $k_0$ the components converge to $	\overline{E}_k$, and   if $k$ is above $k_0$ the components go to $+ \infty$. 
\begin{figure}
	\begin{center}
		\textbf{(a)} \hspace{5cm} \textbf{(b)}\\
		\includegraphics[scale=0.14]{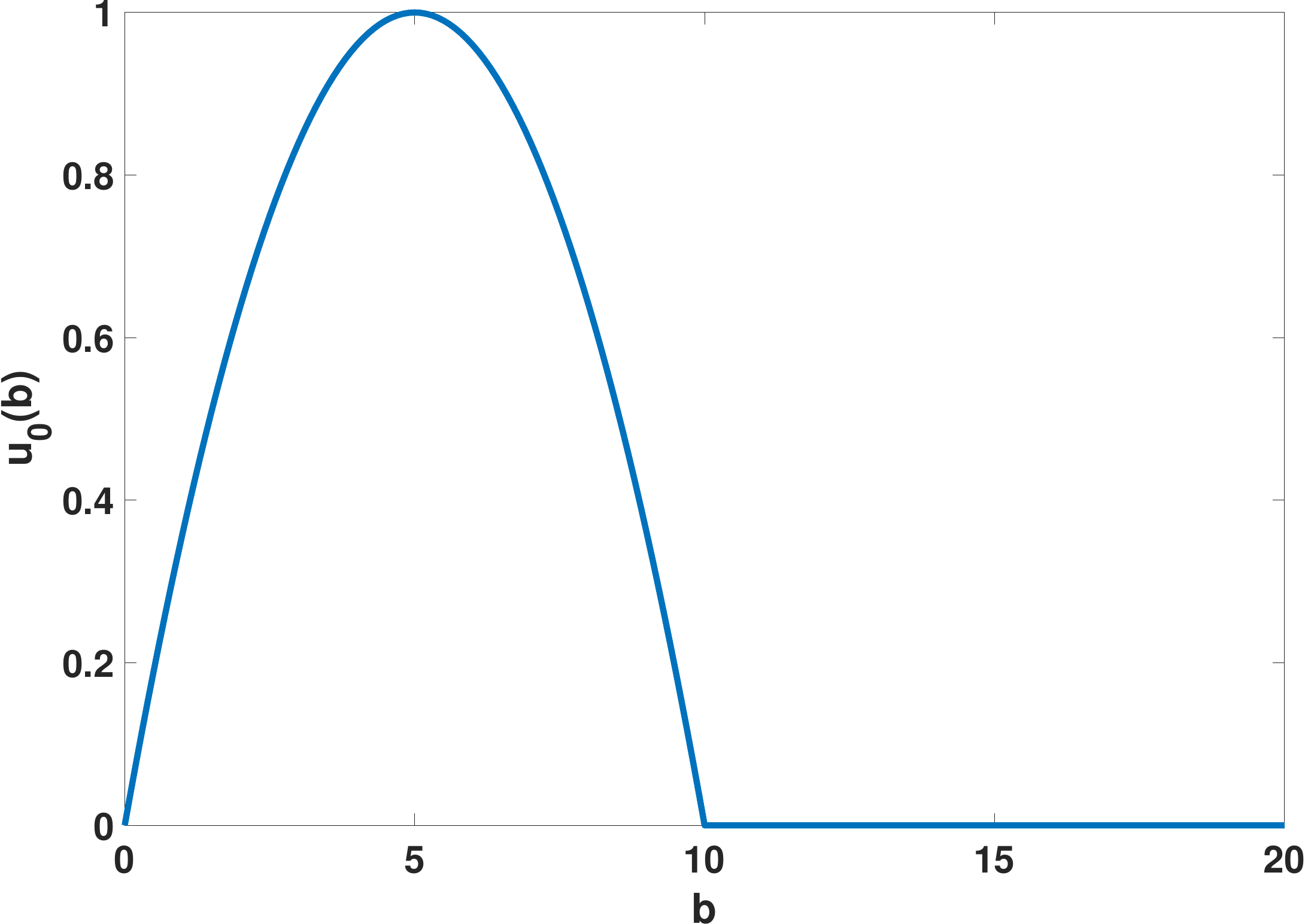}
		\includegraphics[scale=0.14]{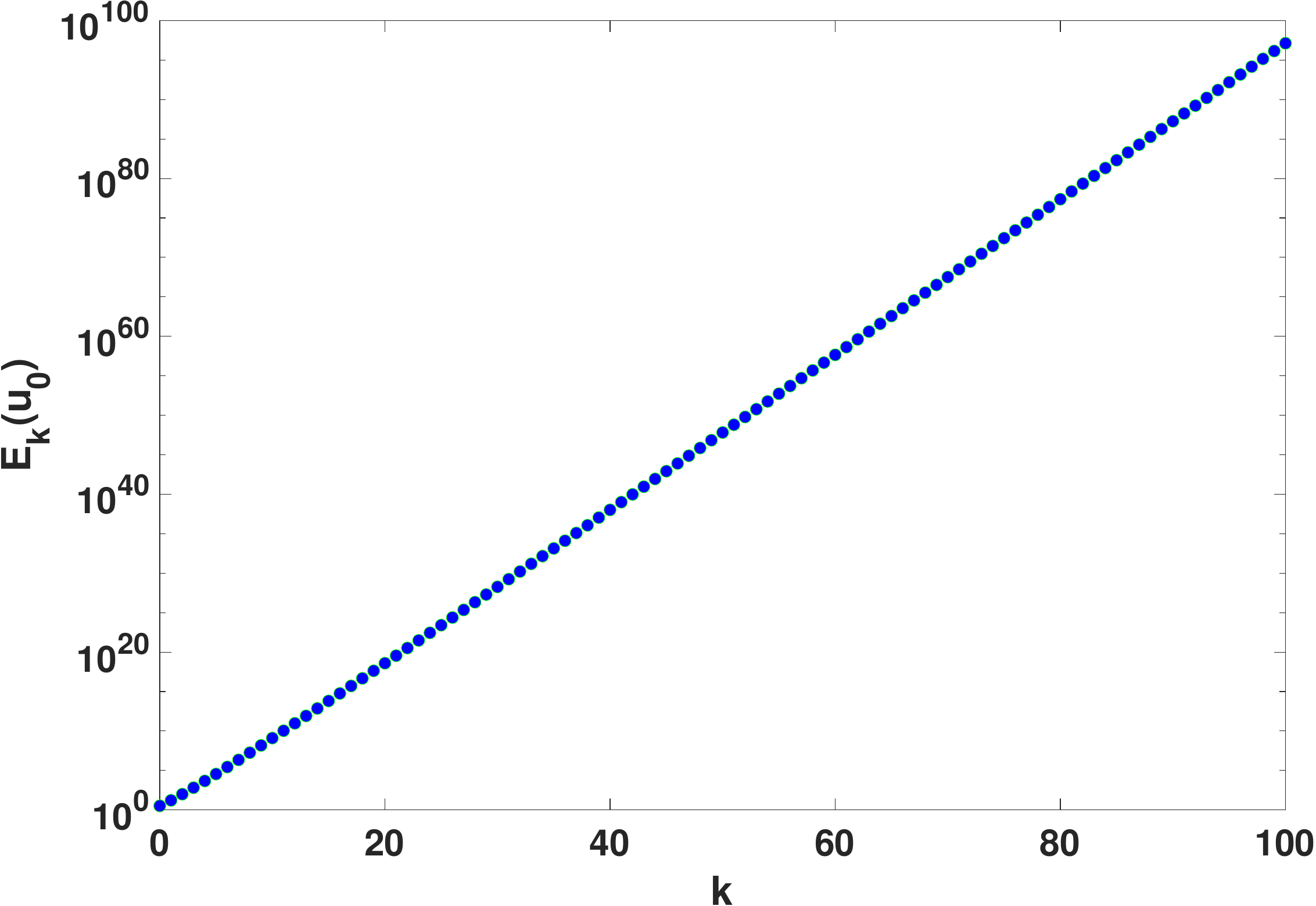}\\
		\textbf{(c)} \hspace{5cm} \textbf{(d)}\\
		\includegraphics[scale=0.14]{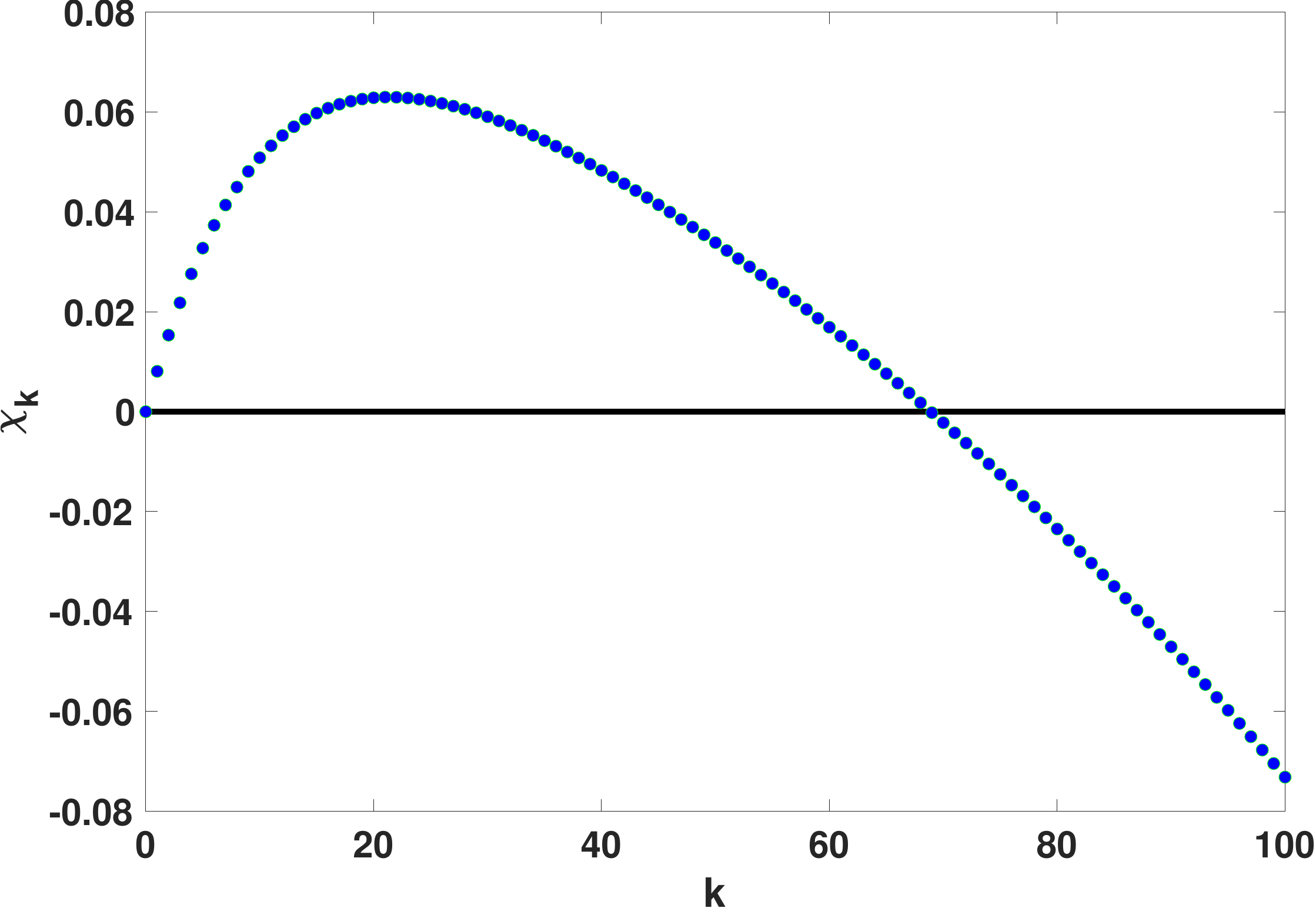}
		\includegraphics[scale=0.14]{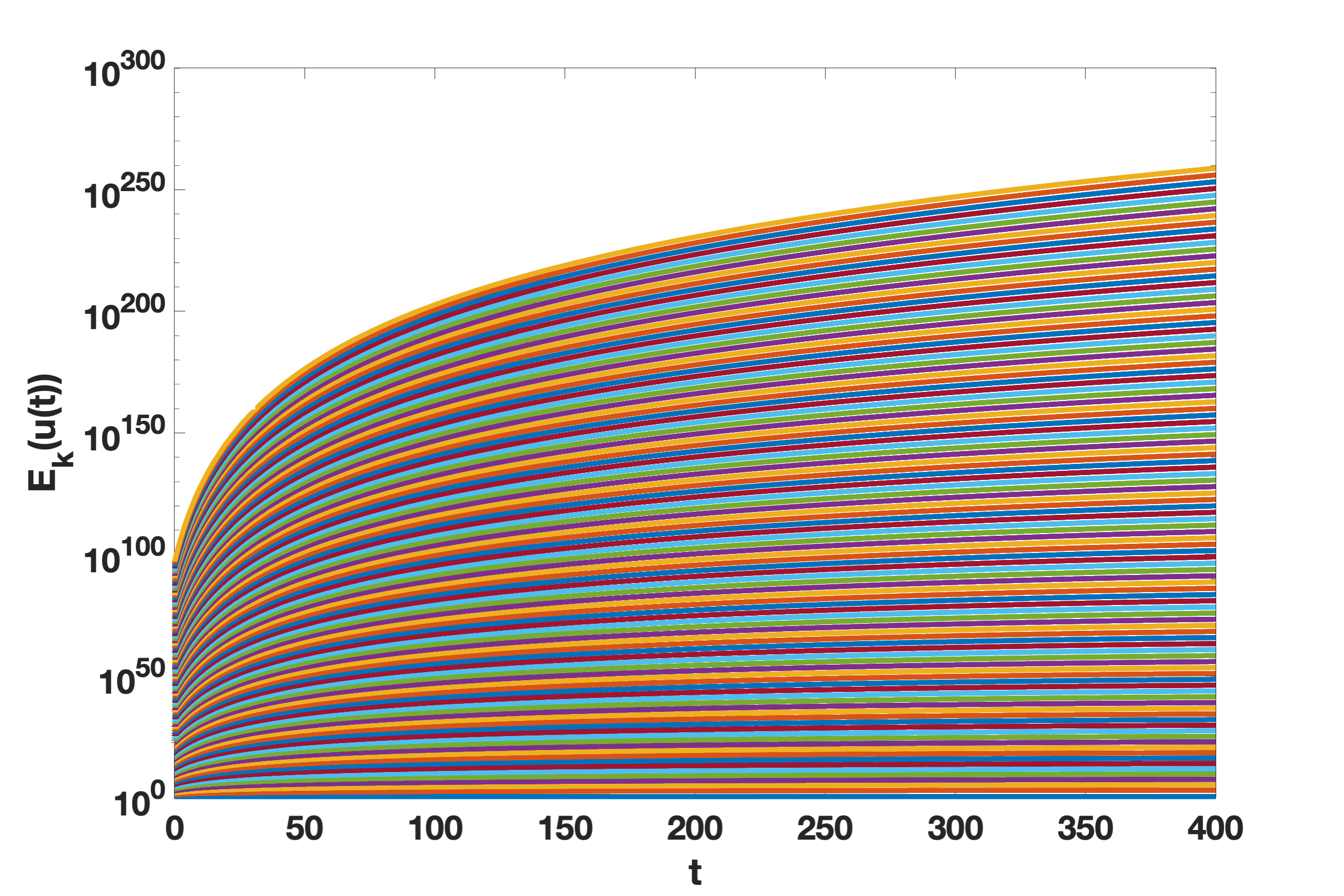}
	\end{center}
	\caption{\textit{We plot  (a) $b \mapsto u_0(b) $; (b) $k \mapsto E_k(u_0) $ for $k=0, \ldots, 100$ (with  y axis log scale); (c) $k \mapsto \chi_k $ for $k=0, \ldots, 100$; and (d) $t \mapsto E_k(u(t)) $ for $k=0, \ldots, 100$ (with y axis log scale);   }}\label{Fig3}
\end{figure}

%

\section{Discussion} 
\label{Section7}

This article proposes a new class of models describing the aging process. The model is based on the notion of biological age, which is a quantity reflecting the aging due to cells failing to repair DNA damages, illness, injuries, or, more generally speaking, corresponding to the body's aging.  The key features of the model are the following: 

\begin{itemize}
	\item[\rm 1)]  a drift term with a constant velocity  describing the aging process at the cellular level;
	
	\item[\rm 2)]  a rejuvenation operator describing the repair, recovery and healing processes during life; 
	
	\item[\rm 3)]   a premature aging operator corresponds to the medical care of injuries and illness.  
\end{itemize}
In this work, we consider the simplest version of the model. The model can be extended in several directions. To conclude the paper, we propose some possible extensions. 

\subsection{Aging model with birth and death processes}
The full model with both rejuvenation and premature aging with birth and death processes is the following
\begin{equation}  \label{6.1}
	\left\lbrace
	\begin{array}{rl}
		\partial_t u(t,b)+\partial_b u(t,b) &= -\mu(b) \, u(t,b)+ \beta(t) \, \Gamma(b),\vspace{0.2cm}\\
		& \quad  - (\tau_-+\tau_+) u(t,b) \vspace{0.2cm}\\
		&\quad + \tau_- \, g_-  \,  u\left(t, g_- \, b\right) \vspace{0.2cm}\\
		&\quad +\tau_+ \,  g_+ \,  u\left(t, g_+ \, b\right) \vspace{0.2cm}\\
		u(t,0)&=0,\\
		u(0,b)&=u_0(b) \in L^1_+\left((0,\infty),\R \right).
	\end{array}
	\right.
\end{equation}
In the above model, the function $-\mu(b)$ is the mortality rate for individuals with biological age $b$.  That is 
$$
\exp \left(- \int_{b_1}^{b_2} \mu \left(b\right) \d b \right)
$$
is the probability for an individual to survive from the biological $b_1$ to the biological age $b_2$. 

The term $ \beta(t)$ is the flow of new born at time $t$, that is 
$$
\int_{t_1}^{t_2}  \beta \left(\sigma\right) \d \sigma 
$$ 
is the number of newborn between $t_1$ and $t_2$.

The map $b \to  \Gamma(b)$ is the density of probability to have a biological age $b$ at birth. That is 
$$
\int_{b_1}^{b_2} \Gamma(b) db
$$
is the probability to obtain a newborn with biological age $b$ between $b_1$ and $b_2$. Moreover, 
$$
\int_{0}^{\infty} \Gamma(b) db=1. 
$$ 

\begin{assumption} \label{ASS6.1} We assume that $b \mapsto \mu(b)$ and $t \mapsto \beta(t)$ are constant functions. We also assume that 
	$$
	\mu >0 \text{ and } \beta >0. 
	$$
	We assume that the biological age of newborns follows a gamma density of probably. That is, 
	$$
	\Gamma(b)= \dfrac{\delta^\alpha \, b^{\alpha-1} \exp \left(-\delta \, b\right)}{\left(\alpha-1 \right)!} \text{ for } b>0,
	$$
	where $\alpha>1$, $\delta>0$, and the gamma function $	\left(\alpha-1 \right)!$ corresponds here to a constant of normalization, and is defined by 
	$$
	\left(\alpha-1 \right)!=\int_{0}^{\infty}   b^{\alpha-1} \exp \left(- b\right) \d b.
	$$
\end{assumption}
In Appendix \ref{AppendixA}, we discuss a aging model with birth and death processes. 
\subsection{Model with generalized jumps functions}
The full model with both rejuvenation and premature aging reads as follows 
\begin{equation}  \label{8.1}
	\left\lbrace
	\begin{array}{rl}
		\partial_t u(t,b)+\partial_b u(t,b) &= \ - (\tau_-+\tau_+) u(t,b) \vspace{0.2cm}\\
		&\quad + \tau_- \, f_-'(b)  \,  u\left(t, f_-(b) \right) \vspace{0.2cm}\\
		&\quad +\tau_+ \,  f_+'(b)  \,  u\left(t, f_+(b)\right) \vspace{0.2cm}\\
		u(t,0)&=0, \vspace{0.2cm}\\
		u(0,b)&=u_0(b) \in L^1_+\left((0,\infty),\R \right).
	\end{array}
	\right.
\end{equation}
We can extend the above model by setting 
$$
f_-(b)=\left( 1-\delta_-(b) \right) b \leq b \leq 	\left( 1+\delta_+(b) \right) b =f_+(b). 
$$ 
To assure the total mass preservation of the model we assume that  
$$
f_-(0)=f_+(0)=0
$$
and to preserve the positivity of the solutions we assume that 
$$
f_-'(b) \geq 0 \text{ and } f_+'(b)  \geq 0, \forall b \geq 0. 
$$
For example, we could use 
$$
f_+(b)=\left( 1+\delta_+ b^m \right) \, b, \text{ with } m\geq 0,
$$ 
where $\delta_+(b)=\delta_+ b^m$ could be any positive polynomial in $b$, that would model the average amplitude of  rejuvenation jumps.

The premature aging jumps $f_-(b)$ must remain below $b$ (the biological age after the jump) therefore, we could use 
$$
f_-(b)=\left( 1- \dfrac{\delta_-}{1+\chi \, b^m} \right) \, b, \text{ with } m\geq 0 \text{ and }0\leq \chi \leq \delta_-<1. 
$$

\subsection{Model with both chronological age and biological age}
It is unrealistic to get younger than $20$ years because puberty and post-pubertal growth are irreversible events, nor after $80$ years when the stock of undifferentiated stem cells is exhausted. Likewise, it is unreasonable to exceed the age of more than $120$ years, which is the physiological limit of the human lifespan. Hence the rate of rejuvenation or premature aging should not be constant in the function of the chronological age.  Therefore it would be important for this problem to consider both the biological and chronological ages. 

\medskip 
We  could also combine the biological and chronological age. Consider $a$ the chronological age (i.e. the time since birth) then we can combine both the chronological and the biological age and we obtain 
\begin{equation}   \label{8.2}
	\left\lbrace
	\begin{array}{rl}
		\partial_t u+\partial_a u+\partial_b u& =-\mu(a,b) u(t,a,b)  \vspace{0.2cm}\\
		&- \tau_+ u(t,a,b)+ \tau_+ \left( 1+\delta_+ \right)  u\left(t,a, \left( 1+\delta_+ \right) b\right) \vspace{0.2cm}\\ 
		&- \tau_- u(t,a,b)+ \tau_- \left( 1-\delta_- \right)  u\left(t,a, \left( 1-\delta_- \right) b\right)  \vspace{0.2cm}\\
		u(t,0,b)&=b(t) \Gamma(b), \text{ for } b \geq 0, \vspace{0.2cm}\\
		u(t,a,0)&=0, \text{ for } a \geq 0,
	\end{array}
	\right.
\end{equation}
with an initial distribution 
$$
u(0,a,b)=u_0(a,b) \in L^1_+\left((0,\infty)^2,\R \right).
$$
The function $u(t,a,b)$ is the density of population at time $t$ with respect to the chronological age $a$ and the biological age $b$. This means that if $0 \leq a_1 \leq a_2$ and $0 \leq b_1 \leq b_2$ then 
$$
\int_{a_1}^{a_2}\int_{b_1}^{b_2}u(t,a,b) \, da\,db
$$
is the number of individuals with chronological age $a$ in between $a_1$ and $a_2$ and the biological $b$ in between $b_1$ and $b_2$. 

This problem can be reformulated as follows 
\begin{equation}   \label{8.2}
	\left\lbrace
	\begin{array}{rl}
		\partial_t u+\partial_a u& =Au(t,a,b)-\mu(a,b) u(t,a,b)  \vspace{0.2cm}\\
		&- \tau_+ u(t,a,b)+ \tau_+ \left( 1+\delta_+ \right)  u\left(t,a, \left( 1+\delta_+ \right) b\right) \vspace{0.2cm}\\ 
		&- \tau_- u(t,a,b)+ \tau_- \left( 1-\delta_- \right)  u\left(t,a, \left( 1-\delta_- \right) b\right)  \vspace{0.2cm}\\
		u(t,0,b)&=b(t) \Gamma(b), \text{ for } b \geq 0, 
	\end{array}
	\right.
\end{equation}
We refer to Magal and Ruan \cite{Magal-Ruan} for more results about age-structured models combined with an extra structuring variable.

\vspace{1cm}
\appendix
\begin{center}
	{\LARGE	\textbf{Appendix}}
\end{center}
\section{Aging model with birth and death processes}
\label{AppendixA}

\medskip 
\noindent \textbf{Abstract Cauchy problem reformulation:} The problem \eqref{6.1} can be reformulated as an abstract Cauchy problem 
\begin{equation} \label{6.2}
	\left\{ 
	\begin{array}{l}
		u'(t)=Au(t)+ B u(t)- \mu u(t) +\beta \, \Gamma(.), \text{ for } t \geq 0,\vspace{0.2cm} \\
		\text{with}\vspace{0.2cm} \\
		u(0)=u_0 \in L^1_+ \left(\left(0, \infty\right), \R \right).	
	\end{array}
	\right.
\end{equation}
where 
$$
Bu= \tau_- B_{g_-}u+ \tau_+ B_{g_+}u-\left(\tau_-+\tau_+\right) u.
$$
\begin{theorem} \label{TH6.2}Let Assumption \ref{ASS5.1} and Assumption \ref{ASS6.1} be satisfied. Then  the mild solution \eqref{6.2} is given by 
	\begin{equation*}
		u(t)=T_{A+B-\mu I}(t)u_0+ \int_{0}^{t}  T_{A+B-\mu I}(t-\sigma) \beta \, \Gamma(.) \d \sigma, \forall t \geq 0, 
	\end{equation*} 
	where 
	$$
	T_{A+B-\mu I}(t)=e^{-\mu t}T_{A+B}(t), \forall t \geq 0.
	$$
	Moreover 
	$$
	\lim_{t \to \infty} u(t)= \overline{u} \geq 0,
	$$
	where 
	\begin{equation} \label{6.3}
		\overline{u}= \int_{0}^{\infty}  T_{A+B-\mu I}(\sigma) \beta \, \Gamma(.) \d \sigma.	
	\end{equation}
\end{theorem}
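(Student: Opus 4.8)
The plan is to treat \eqref{6.2} as an inhomogeneous abstract Cauchy problem that is a bounded perturbation of $A$ with a constant (time-independent) source $\beta\,\Gamma(\cdot)$, and then to extract the long-time limit from the exponential decay of the underlying semigroup. Throughout I would work in $X=L^1((0,\infty),\R)$ and rely on the generation and isometry facts already recorded for $A$, $B_{g_-}$ and $B_{g_+}$.

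First I would note that $B=\tau_- B_{g_-}+\tau_+ B_{g_+}-(\tau_-+\tau_+)I$ is a bounded linear operator on $X$, being a linear combination of the isometries $B_{g_\pm}$ and a scalar multiple of the identity. Hence, by the bounded perturbation theorem, $A+B$ generates a $C_0$-semigroup $\{T_{A+B}(t)\}_{t\ge 0}$; and since (under Assumption \ref{ASS6.1}) $\mu$ is constant, $-\mu I$ is a scalar multiple of the identity and commutes with $T_{A+B}(t)$, which gives $T_{A+B-\mu I}(t)=e^{-\mu t}T_{A+B}(t)$. The stated mild-solution formula is then exactly the variation-of-constants (Duhamel) formula for \eqref{6.2} with constant forcing; existence and uniqueness follow from the generation property precisely as in Theorem \ref{TH5.3}, the source contributing only the extra integral term. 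Since $\Gamma$ is a probability density, $\|\Gamma\|_{L^1}=1$ and $\beta\,\Gamma(\cdot)\in X$ with norm $\beta$.

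The core of the proof is the decay estimate $\|T_{A+B-\mu I}(t)\|\le e^{-\mu t}$, for which I would show that $\{T_{A+B}(t)\}$ is a positive contraction semigroup. Positivity follows by writing $A+B=\bigl(A-(\tau_-+\tau_+)I\bigr)+\bigl(\tau_- B_{g_-}+\tau_+ B_{g_+}\bigr)$: the first generator produces the positive semigroup $e^{-(\tau_-+\tau_+)t}T_A(t)$, and it is perturbed by the positive bounded operator $\tau_- B_{g_-}+\tau_+ B_{g_+}$, so every term of the Dyson--Phillips series is nonnegative and $T_{A+B}(t)$ is positive. Contractivity then comes from the mass-conservation identity of Theorem \ref{TH5.4}, which gives $\|T_{A+B}(t)\phi\|_{L^1}=\|\phi\|_{L^1}$ for every $\phi\ge 0$; for arbitrary $\phi\in X$, positivity yields $|T_{A+B}(t)\phi|\le T_{A+B}(t)|\phi|$ pointwise, whence $\|T_{A+B}(t)\phi\|_{L^1}\le\|\,|\phi|\,\|_{L^1}=\|\phi\|_{L^1}$. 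Thus $\|T_{A+B}(t)\|\le 1$, and since $\mu>0$ we get $\|T_{A+B-\mu I}(t)\|\le e^{-\mu t}$.

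With this estimate the convergence is immediate. The homogeneous term satisfies $\|T_{A+B-\mu I}(t)u_0\|\le e^{-\mu t}\|u_0\|\to 0$. After the change of variable $s=t-\sigma$ the Duhamel term equals $\int_0^t T_{A+B-\mu I}(s)\,\beta\,\Gamma(\cdot)\,\d s$, whose integrand has norm at most $\beta\,e^{-\mu s}$; this bound is integrable on $(0,\infty)$, so the Bochner integral $\overline{u}=\int_0^\infty T_{A+B-\mu I}(s)\,\beta\,\Gamma(\cdot)\,\d s$ in \eqref{6.3} converges absolutely and the tail $\int_t^\infty$ tends to $0$. Hence $u(t)\to\overline{u}$, and $\overline{u}\ge 0$ because the integrand is nonnegative (a positive semigroup applied to $\beta\,\Gamma\ge 0$). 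The step I expect to be the main obstacle is the contraction estimate: Theorem \ref{TH5.4} only asserts mass conservation for nonnegative solutions, so the argument must first secure positivity of $\{T_{A+B}(t)\}$ and then pass to general $\phi$ through the $|\phi|$ decomposition. Establishing that the growth bound of $T_{A+B}$ is $\le 0$ is precisely what converts the killing rate $\mu>0$ into the exponential decay driving the convergence to $\overline{u}$.
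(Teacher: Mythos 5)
Your proposal is correct, and it is essentially the argument the paper intends: note that the paper states Theorem \ref{TH6.2} without any written proof, so your text supplies exactly the details that are left implicit. The route you take --- bounded perturbation to generate $T_{A+B}(t)$, commutation of $-\mu I$ to get $T_{A+B-\mu I}(t)=e^{-\mu t}T_{A+B}(t)$, Duhamel for the constant source, and then exponential decay driving $u(t)\to\overline{u}$ --- is the one already encoded in the formulas of the theorem statement itself. The one genuinely non-trivial ingredient, which the paper never makes explicit, is your contraction estimate $\Vert T_{A+B}(t)\Vert \leq 1$: without it, the factor $e^{-\mu t}$ alone does not give decay, since a priori $T_{A+B}(t)$ could grow like $e^{\omega t}$ with $\omega>\mu$. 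Your derivation of it is sound: positivity of $T_{A+B}(t)$ from the Dyson--Phillips series for the decomposition $\bigl(A-(\tau_-+\tau_+)I\bigr)+\bigl(\tau_- B_{g_-}+\tau_+ B_{g_+}\bigr)$, the mass-conservation identity \eqref{5.3} of Theorem \ref{TH5.4} giving $\Vert T_{A+B}(t)\phi\Vert_{L^1}=\Vert\phi\Vert_{L^1}$ for $\phi\geq 0$, and the passage to general $\phi$ via $\vert T_{A+B}(t)\phi\vert \leq T_{A+B}(t)\vert\phi\vert$. With that in hand, the absolute convergence of the Bochner integral \eqref{6.3}, the vanishing of the tail, and the nonnegativity of $\overline{u}$ all follow as you say.
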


\noindent \textbf{Moments formulation:} We obtain the following result using similar arguments to Theorem  \ref{TH3.6}. 

By using change of variable, we obtain  
$$
E_k(\Gamma)=\int_{0}^{\infty} \dfrac{\delta^\alpha b^{\alpha+k-1} \exp \left(-\delta b\right)}{\left(\alpha-1 \right)!} \d b =\delta^{-k}\dfrac{\left(\alpha+k-1 \right)!}{\left(\alpha-1 \right)!}. 
$$
\begin{theorem} \label{TH6.3} Let Assumption \ref{ASS5.1} be satisfied.  The rejuvenation and premature aging model \eqref{5.1} has a unique non-negative mild solution.  We have  
	\begin{equation*} 
		\frac{d}{dt}	E_0(u(t)) =-\mu \,	E_0(u(t)) + \beta, 
	\end{equation*}
	and the model preserves the total mass (number) of individuals. That is 
	\begin{equation} \label{6.4}
		\lim_{t \to +\infty} E_0(u(t)) =   \dfrac{ \beta}{ \mu}. 
	\end{equation}
	Moreover, the higher moment satisfies the following system of ordinary differential equations for each $k \geq 1$, 
	\begin{equation}  \label{6.5}
		\frac{d}{dt}	E_k(u(t)) =k \, E_{k-1}(u(t))- \left( \mu+\chi_k \right) \, E_{k}(u(t))+\beta \, \delta^{-k}\dfrac{\left(\alpha+k-1 \right)!}{\left(\alpha-1 \right)!}.
	\end{equation}
	
\end{theorem}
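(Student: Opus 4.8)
The plan is to separate the statement into the existence/uniqueness part and the moment identities, treating the moments by the same formal procedure already used for Theorem~\ref{TH3.6}. I would not re-prove existence and uniqueness of a non-negative mild solution from scratch: this is precisely the content of Theorem~\ref{TH6.2}, whose variation-of-constants formula built on the positive semigroup $T_{A+B-\mu I}(t)=e^{-\mu t}T_{A+B}(t)$ produces a unique $u \in C\left([0,\infty),L^1_+\left((0,\infty),\R\right)\right)$. Granting this, the remaining work is to differentiate the moments $E_k(u(t))=\int_0^\infty \sigma^k u(t,\sigma)\,\d\sigma$ in time and read off the right-hand sides directly from the PDE \eqref{6.1}.

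For $k=0$ I would integrate the first equation of \eqref{6.1} over $b\in(0,\infty)$. The transport term $\int_0^\infty \partial_b u\,\d b$ vanishes because $u(t,0)=0$ together with decay of $u(t,\cdot)$ at infinity; the two nonlocal gain terms $\tau_\pm g_\pm \int_0^\infty u(t,g_\pm b)\,\d b$ each become $\tau_\pm \int_0^\infty u(t,\eta)\,\d\eta$ after the substitution $\eta=g_\pm b$, so they cancel exactly against $-(\tau_-+\tau_+)E_0(u)$. What survives is the death term $-\mu E_0(u)$ and the birth term $\beta\int_0^\infty \Gamma(b)\,\d b=\beta$, using $\int_0^\infty\Gamma=1$ from Assumption~\ref{ASS6.1}. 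This yields the scalar linear ODE $\tfrac{d}{dt}E_0(u(t))=-\mu E_0(u(t))+\beta$, whose explicit solution $E_0(u(t))=e^{-\mu t}E_0(u_0)+\tfrac{\beta}{\mu}\left(1-e^{-\mu t}\right)$ converges to $\beta/\mu$ as $t\to\infty$ since $\mu>0$, giving \eqref{6.4}.

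For $k\geq 1$ I would repeat the computation with weight $\sigma^k$. Integration by parts on $\int_0^\infty \sigma^k\partial_b u\,\d\sigma$ gives $k\,E_{k-1}(u)$ once the boundary contribution $[\sigma^k u]_0^\infty$ is checked to vanish. The substitution $\eta=g_\pm\sigma$ turns each nonlocal term $\tau_\pm g_\pm\int_0^\infty\sigma^k u(t,g_\pm\sigma)\,\d\sigma$ into $\tau_\pm g_\pm^{-k}E_k(u)$, and collecting these with $-(\tau_-+\tau_+)E_k(u)$ reproduces exactly the coefficient $-\chi_k$ from \eqref{5.5} (recalling $g_\pm^{-k}=(1\pm\delta_\pm)^{\mp k}$), now shifted by the constant death rate to $-(\mu+\chi_k)$. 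The birth term contributes $\beta\,E_k(\Gamma)=\beta\,\delta^{-k}\tfrac{(\alpha+k-1)!}{(\alpha-1)!}$, using the gamma-moment computation recorded just before the theorem. Assembling these terms gives \eqref{6.5}.

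The main obstacle is rigor rather than algebra: the moment manipulations are only formal unless the $E_k(u(t))$ are finite and the boundary terms in the integration by parts genuinely vanish. I would address this by restricting first to initial data with compact support (or rapid decay), using the support/decay propagation of Lemma~\ref{LE3.1} together with the finiteness of all moments $E_k(\Gamma)$ of the gamma source, to guarantee that $t\mapsto E_k(u(t))$ is finite and differentiable on $[0,\infty)$; this legitimizes differentiating under the integral sign and discarding $[\sigma^k u]_0^\infty$, and the general case follows by density. I would stress that finiteness here is only a local-in-time and per-$k$ statement: one does not expect uniform-in-$k$ control, since $\mu+\chi_k\to-\infty$ forces the high moments to grow, consistent with the blow-up already recorded in Proposition~\ref{PROP5.4}.
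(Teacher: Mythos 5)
Your proposal is correct and takes essentially the same route as the paper: the paper proves Theorem~\ref{TH6.3} exactly by the $\sigma^k$-weighted integration already used for Theorem~\ref{TH3.6} (integration by parts on the transport term, change of variables $\eta=g_{\pm}\sigma$ in the nonlocal terms, and the gamma-moment identity $E_k(\Gamma)=\delta^{-k}\frac{(\alpha+k-1)!}{(\alpha-1)!}$ recorded just before the statement), with existence and uniqueness delegated to the semigroup construction of Theorem~\ref{TH6.2}. Your additional rigor discussion (finiteness of moments, density argument, explicit solution of the $k=0$ equation) only makes explicit what the paper leaves formal; the one slip is the parenthetical claim $g_{\pm}^{-k}=(1\pm\delta_{\pm})^{\mp k}$, which should read $(1\pm\delta_{\pm})^{-k}$, though your actual computation uses the correct values.
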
 

\begin{proposition}\label{PROP6.4} Let $k_1$ be an integer such that 
	\begin{equation*}
		\chi_k<-\mu, \forall k > k_1.	
	\end{equation*}
	Then 
	\begin{equation}  \label{6.6}
		\lim_{t \to +\infty} E_k(u(t)) =+\infty,  \forall k >  k_1,
	\end{equation}	
	whenever $ E_k(u_0) <+\infty$. 
	
\end{proposition}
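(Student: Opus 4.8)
The plan is to read off from the moment equation \eqref{6.5} a scalar differential inequality for the single function $t \mapsto E_k(u(t))$ and then force blow-up by comparison with an explicitly solvable linear ODE. First I would record the two sign facts that drive everything. Since $u(t)$ is a nonnegative mild solution (Theorem \ref{TH6.2}), every moment is nonnegative, so in particular the coupling term $k\,E_{k-1}(u(t)) \geq 0$ for all $t \geq 0$. Moreover, under Assumption \ref{ASS6.1} the inhomogeneous constant
$$
c_k := \beta \, \delta^{-k}\dfrac{\left(\alpha+k-1 \right)!}{\left(\alpha-1 \right)!}
$$
is strictly positive. For $k > k_1$ the hypothesis gives $\mu + \chi_k < 0$, so $\lambda_k := -(\mu+\chi_k) > 0$.

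Discarding the nonnegative term $k\,E_{k-1}(u(t))$ in \eqref{6.5} then yields
$$
\frac{d}{dt} E_k(u(t)) \geq \lambda_k \, E_k(u(t)) + c_k, \quad \forall t \geq 0 .
$$
I would compare $E_k(u(t))$ with the solution $z$ of $z'(t) = \lambda_k z(t) + c_k$, $z(0) = E_k(u_0)$, namely
$$
z(t) = \left( E_k(u_0) + \frac{c_k}{\lambda_k}\right) e^{\lambda_k t} - \frac{c_k}{\lambda_k},
$$
which tends to $+\infty$ because $\lambda_k, c_k > 0$ and $E_k(u_0) \geq 0$. Setting $w = E_k(u(\cdot)) - z$, one has $w(0) = 0$ and $w' \geq \lambda_k w$, so $t \mapsto e^{-\lambda_k t} w(t)$ is nondecreasing and hence $w \geq 0$; therefore $E_k(u(t)) \geq z(t) \to +\infty$, which is exactly \eqref{6.6}.

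The point requiring care — and the main obstacle — is to guarantee that $E_k(u(t))$ is finite for every finite $t$, so that \eqref{6.5} and the inequality above are genuine rather than vacuous. This is precisely where the hypothesis $E_k(u_0) < +\infty$ and the exponential decay of the gamma density $\Gamma$ (all of whose moments are finite) are used. I would prove local finiteness by induction on $k$: $E_0(u(t))$ is finite since it solves $E_0' = -\mu E_0 + \beta$ explicitly (consistent with \eqref{6.4}), and, assuming $E_{k-1}(u(\cdot))$ is locally bounded, the integrated form of \eqref{6.5} writes $E_k(u(t))$ as $E_k(u_0)$ plus a time integral of the bounded quantity $k\,E_{k-1}(u(\sigma))$, plus $c_k\,t$, plus $-(\mu+\chi_k)\int_0^t E_k(u(\sigma))\,d\sigma$; a Gronwall estimate then bounds $E_k(u(t))$ on each interval $[0,T]$. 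Once finiteness on compact time intervals is secured, the comparison argument is rigorous and closes the proof. Alternatively, to avoid differentiating one can argue entirely at the integrated level from $E_k(u(t)) \geq E_k(u_0) + \lambda_k\int_0^t E_k(u(\sigma))\,d\sigma + c_k\,t$, which by the integral form of Gronwall's lemma already yields $E_k(u(t)) \to +\infty$.
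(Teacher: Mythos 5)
Your proof is correct, and in fact there is nothing in the paper to compare it against: Proposition \ref{PROP6.4} is stated without proof, as an unproved consequence of the moment system \eqref{6.5} from Theorem \ref{TH6.3}, so your comparison argument supplies exactly the reasoning the paper leaves implicit (drop the nonnegative coupling term $k\,E_{k-1}\geq 0$, use positivity of the source, and note that the coefficient $-(\mu+\chi_k)$ is positive for $k>k_1$). Two remarks. First, under the stated hypothesis your argument can be shortened: all three terms on the right-hand side of \eqref{6.5} are nonnegative and the constant $c_k=\beta\,\delta^{-k}(\alpha+k-1)!/(\alpha-1)!$ is strictly positive, so $\frac{d}{dt}E_k(u(t))\geq c_k$ and hence $E_k(u(t))\geq E_k(u_0)+c_k\,t\to+\infty$ directly; your exponential comparison gives the same conclusion with a sharper rate, which is harmless but not needed. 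Second, the finiteness issue you flag is real---it is precisely what the paper glosses over, since \eqref{6.5} is only derived formally---but your proposed fix has a circularity: Gronwall's lemma cannot be applied to $E_k(u(t))$ before one knows that this quantity is finite and locally integrable, and the integrated form of \eqref{6.5} is not available until then. The standard repair is to run your induction on the truncated moments $E_k^R(t)=\int_0^R\sigma^k u(t,\sigma)\,\mathrm{d}\sigma$, which are finite because $u(t,\cdot)\in L^1$, derive from the mild formulation a Gronwall bound uniform in $R$ on each interval $[0,T]$, and then let $R\to\infty$ by monotone convergence. Note also that your inductive step uses $E_{k-1}(u_0)<+\infty$, which is not among the stated hypotheses but does follow from $E_0(u_0)<+\infty$ and $E_k(u_0)<+\infty$ by H\"older's inequality; this deserves a line in the write-up.
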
 

\medskip 
\noindent \textbf{Equilibrium solution:}  An equilibrium solution satisfies some delay equation with both advance and retarded delay. That is,  
\begin{equation}  \label{6.7}
	\left\lbrace
	\begin{array}{rl}
		\overline{u}'(b) &= \beta \, \Gamma(b) - \left( \mu+\tau_-+\tau_+ \right) \overline{u}(b) \vspace{0.2cm}\\
		&\quad + \tau_- \, g_-  \,  \overline{u}\left(g_- \, b\right) \vspace{0.2cm}\\
		&\quad +\tau_+ \,  g_+ \,   \overline{u} \left(g_+ \, b\right), \vspace{0.2cm}\\
		\overline{u}(0)&=0,  
	\end{array}
	\right.
\end{equation}
and the difficulty of solving such an equation comes from the following 
$$
g_+ \, b>b >g_- \, b, \, \forall b >0.
$$ 
It follows that, even the existence of equilibrium solution is not a classical problem to investigate. 

\medskip 
\noindent \textbf{Non existence result for exponentially decreasing equilibrium solution of \eqref{5.1}:}  Assume that $b \mapsto \overline{u}(b) $ a non-negative continuously differentiable map satisfying system \eqref{5.10}.  

\begin{assumption} \label{ASS6.5}
	Assume that the map $b \mapsto \overline{u}(b) $ is non-negative, and non null, and  continuously differentiable map, and  satisfies the system \eqref{6.7}. Assume in addition that $ \overline{u}(b) $ is exponentially decreasing. That is, there exist two constants  $M>0$, and $\gamma>0$, such that 
	\begin{equation} \label{6.8}
		\overline{u}(b) \leq M e^{-\gamma b}, \forall b \geq 0. 	
	\end{equation}
\end{assumption}
By using  the first equation of \eqref{5.10}, we deduce that 
$$
\vert \overline{u}'(b) \vert \leq \widetilde{M} e^{-\gamma \, \min(1,g_+, g_-, \delta/2) \, b}, \forall b \geq 0,
$$
for some suitable $\widetilde{M} >0$.

So, under Assumption \ref{ASS5.7},  all the moments of  $\overline{u}(b) $ and $\overline{u}'(b) $ are well defined. Moreover, by using the first equation of  \eqref{5.10}, we obtain for each $k\geq 1 $ 
\begin{equation*}
	\begin{array}{rl}
		\displaystyle		\int_{0}^{\infty} \sigma^k \overline{u}'(\sigma) \d \sigma &= 	\int_{0}^{\infty} \beta \, \Gamma(b) \d b  - \left(\mu +\tau_-+\tau_+ \right) 	\int_{0}^{\infty}  \sigma^k \overline{u}(\sigma) \d \sigma \\
		
		&\quad  + \tau_+  	\int_{0}^{\infty} \sigma^k \overline{u}\left(t, g_+  \sigma \right)   g_+   \d \sigma\\
		&\quad + \tau_-  	\int_{0}^{\infty} \sigma^k \overline{u}\left(t, g_-  \sigma \right)   g_-   \d \sigma,
	\end{array}		
\end{equation*}
and since $\overline{u}(0)=0 $, we obtain by integrating by parts 
\begin{equation*}
	\begin{array}{rl}
		\displaystyle	-k	\int_{0}^{\infty} \sigma^{k-1} \overline{u}(\sigma) \d \sigma &=  \beta \, \delta^{-k}\dfrac{\left(\alpha+k-1 \right)!}{\left(\alpha-1 \right)!} - \left(\mu +\tau_-+\tau_+ \right) 	\int_{0}^{\infty}  \sigma^k \overline{u}(\sigma) \d \sigma \\
		
		&\quad  + \tau_+  	\int_{0}^{\infty} \sigma^k \overline{u}\left(t, g_+  \sigma \right)   g_+   \d \sigma\\
		&\quad + \tau_-  	\int_{0}^{\infty} \sigma^k \overline{u}\left(t, g_-  \sigma \right)   g_-   \d \sigma,
	\end{array}		
\end{equation*}
and we obtain 
\begin{equation} \label{6.9}
	0=\beta \, \delta^{-k}\dfrac{\left(\alpha+k-1 \right)!}{\left(\alpha-1 \right)!} +	k	E_{k-1}(\overline{u})- \left( \chi_k+ \mu \right)  \, E_{k}(\overline{u}), \forall k \geq 1, \forall k \geq 1, 
\end{equation}
where $\chi_k$ is defined by \eqref{5.5}. 

Under Assumption \ref{ASS6.5}, we must have 
$$
E_{k-1}(\overline{u})>0, \text{ and } E_{k}(\overline{u})>0 , \forall k \geq 0,
$$
and by using \eqref{5.6}, we deduce that \eqref{6.9} can not be satisfied for all $k \geq 1$ large enough. Because  $ -\left( \chi_k+ \mu \right)  >0,$ for all $k \geq 1$ large enough. 

Therefore we obtain the following proposition. 

\begin{proposition} \label{PROP6.6}
	The equilibrium $\overline{u}$ defined by \eqref{6.3}  is  not an exponentially decreasing function. That is, for each  $M>0$, and $\gamma>0$,, the function $\overline{u}$ does not satisfy \eqref{6.8}.  
\end{proposition}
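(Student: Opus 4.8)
The plan is to argue by contradiction and to exploit the moment identity \eqref{6.9} that has just been assembled. Suppose the equilibrium $\overline{u}$ given by \eqref{6.3} were exponentially decreasing, i.e. suppose it satisfies \eqref{6.8} for some $M>0$ and $\gamma>0$, so that Assumption \ref{ASS6.5} holds. First I would record the two pieces of positivity I need: $\overline{u}$ is non-null because $E_0(\overline{u})=\beta/\mu>0$ by \eqref{6.4}, and since $\overline{u}(0)=0$ with $\overline{u}$ continuous, non-negative and non-null, it is strictly positive on some subinterval of $(0,\infty)$. Consequently every moment is strictly positive,
$$
E_k(\overline{u})=\int_0^\infty \sigma^k\,\overline{u}(\sigma)\,\d\sigma>0,\qquad \forall k\geq 0.
$$

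Next I would check that the decay hypothesis makes the computation legitimate. Inserting \eqref{6.8} into the first line of \eqref{6.7} gives the pointwise bound on $\overline{u}'$ already noted in the text, so $\overline{u}$ and $\overline{u}'$ both decay exponentially and all the moments $E_k(\overline{u})$, $E_k(\overline{u}')$ converge. This is precisely what allows one to multiply \eqref{6.7} by $\sigma^k$, integrate over $(0,\infty)$, integrate by parts in the $\overline{u}'$ term (the boundary term vanishing because $\overline{u}(0)=0$ and $\overline{u}$ decays at infinity), and change variables in the two jump terms. The outcome is identity \eqref{6.9},
$$
0=\beta\,\delta^{-k}\frac{(\alpha+k-1)!}{(\alpha-1)!}+k\,E_{k-1}(\overline{u})-(\chi_k+\mu)\,E_k(\overline{u}),\qquad\forall k\geq 1,
$$
with $\chi_k$ defined by \eqref{5.5}.

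The contradiction is then a sign count on the right-hand side of \eqref{6.9}. The first term equals $\beta\,E_k(\Gamma)>0$ since $\beta>0$; the second term $k\,E_{k-1}(\overline{u})$ is strictly positive for all $k\geq 1$ by the positivity of the moments; and, because $\chi_k\to-\infty$ by \eqref{5.6}, there is an integer $k_1$ with $\chi_k+\mu<0$ for every $k>k_1$, so $-(\chi_k+\mu)\,E_k(\overline{u})>0$ for such $k$. Hence for all $k>k_1$ the right-hand side is a sum of three strictly positive quantities and cannot vanish, contradicting \eqref{6.9}. Therefore no $\overline{u}$ satisfying \eqref{6.7} can be exponentially decreasing, and in particular the equilibrium \eqref{6.3} fails \eqref{6.8}, which is the assertion.

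The only genuinely delicate point, which I would address before the sign count, is justifying that $\overline{u}$ from \eqref{6.3} is a bona fide $C^1$ solution of the stationary problem \eqref{6.7} and that the integration-by-parts boundary terms truly vanish; once the exponential-decay hypothesis is assumed this regularity and decay are in force and the manipulation is routine. The remainder is purely the counting argument above, which mirrors the proof of Proposition \ref{PROP5.8} but is in fact strengthened here by the additional strictly positive birth contribution $\beta\,E_k(\Gamma)$.
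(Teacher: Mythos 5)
Your proof is correct and follows essentially the same route as the paper: assume the exponential decay \eqref{6.8}, derive the moment identity \eqref{6.9}, and obtain a contradiction because for $k$ large enough (using $\chi_k\to-\infty$ from \eqref{5.6}) all three terms $\beta\,\delta^{-k}\frac{(\alpha+k-1)!}{(\alpha-1)!}$, $k\,E_{k-1}(\overline{u})$, and $-(\chi_k+\mu)\,E_k(\overline{u})$ are strictly positive, so their sum cannot vanish. Your added justifications (strict positivity of the moments via $E_0(\overline{u})=\beta/\mu$, and the remark that one must verify $\overline{u}$ from \eqref{6.3} genuinely solves \eqref{6.7}) are details the paper leaves implicit, and they strengthen rather than alter the argument.
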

The above proposition is surprising because the proposition shows that it is sufficient to perturb an age-structured with a non-local term $ - \tau_- u(t,b)+ \tau_- \left( 1-\delta_- \right)  u\left(t, \left( 1-\delta_- \right) b\right)$ to obtain an equilibrium solution which is not exponential bounded.

\bibliographystyle{aims}

\bibliography{bibliography}

\end{document}